\documentclass{amsart}
\usepackage{graphicx}
\usepackage{amsmath,amssymb,amsfonts}
\newtheorem{theorem}{Theorem}[section]
\newtheorem{lemma}[theorem]{Lemma}
\newtheorem{coro}[theorem]{Corollary}

\theoremstyle{definition}
\newtheorem{definition}[theorem]{Definition}

\numberwithin{figure}{section}
\theoremstyle{remark}

\numberwithin{equation}{section}

%    Absolute value notation

%    Blank box placeholder for figures (to avoid requiring any
%    particular graphics capabilities for printing this document).

\def \A{\mathcal{A}}

\def \la{\lambda}
\def \I{\mathcal{I}}

\begin{document}
\title[Least H-eigenvalue of Hypergraphs]{The least H-eigenvalue of adjacency tensor of hypergraphs with cut vertices}

\author[Y.-Z. Fan]{Yi-Zheng Fan*}
%    Address of record for the research reported here
\address{School of Mathematical Sciences, Anhui University, Hefei 230601, P. R. China}
%    Current address
%\curraddr{Department of Mathematics and Statistics,Case Western Reserve University, Cleveland, Ohio 43403}
\email{fanyz@ahu.edu.cn}
%    \thanks will become a 1st page footnote.
\thanks{*The corresponding author. This work was supported by National Natural Science Foundation of China (Grant No. 11871073, 11771016).
}

%    Information for second author
\author[Z. Zhu]{Zhu Zhu}
\address{School of Mathematical Sciences, Anhui University, Hefei 230601, P. R. China}
\email{2937242741@qq.com}
%\thanks{Support information for the second author.}

\author[Y. Wang]{Yi Wang}
\address{School of Mathematical Sciences, Anhui University, Hefei 230601, P. R. China}
\email{wangy@ahu.edu.cn}

\subjclass[2000]{Primary 15A18, 05C65; Secondary 13P15, 05C15}

\begin{abstract}
Let $G$ be a connected hypergraph with even uniformity, which contains cut vertices.
Then $G$ is the coalescence of two nontrivial connected sub-hypergraphs (called branches) at a cut vertex.
Let $\A(G)$ be the adjacency tensor of $G$.
The least H-eigenvalue of $\A(G)$ refers to the least real eigenvalue of $\A(G)$ associated with a real eigenvector.
In this paper we obtain a perturbation result on the least H-eigenvalue of $\A(G)$
   when a branch of $G$ attached at one vertex is relocated to another vertex, and
characterize the unique hypergraph whose least H-eigenvalue attains the minimum among all hypergraphs in a certain class of hypergraphs which contain a fixed connected hypergraph.
\end{abstract}

\subjclass[2010]{Primary 15A18, 05C65; Secondary 13P15, 14M99}

\keywords{Hypergraph, adjacency tensor, least H-eigenvalue, eigenvector, perturbation}

\maketitle

\section{Introduction}
%In 2005 Lim \cite{Lim} and Qi \cite{Qi2} independently introduced the eigenvalues of tensors or hypermatrices.
Recently, the spectral hypergraph theory developed rapidly, the adjacency tensors $\A(G)$ \cite{CD} of uniform hypergraphs $G$
were introduced to investigating the structure of hypergraphs, just like adjacency matrices to simple graphs.
As $\A(G)$ is nonnegative, by using Perron-Frobenius theorem \cite{CPZ, FGH, YY1, YY2, YY3}, many results about its spectral radius are presented \cite{CQZ,CD,FBH,FTPL,LSQ,LM}.
%The Laplacian tensor $\L(G)$ is a Z-tensor and singular M-tenor \cite{ZQZ}, and has zero as its least H-eigenvalue, where
%an eigenvalue is called an \emph{H-eigenvalue} if it is associated with a real eigenvector, and is called an \emph{N-eigenvalue} otherwise.

For the least H-eigenvalue of $\A(G)$ of a $k$-uniform connected hypergraph $G$,
Khan and Fan \cite{KF2} discussed the limit points of the least H-eigenvalue of $\A(G)$ when $G$ is non-odd-bipartite.
Let $\rho(G)$ be the spectral radius of $\A(G)$.
Shao et al. \cite{SSW} proved that
the $-\rho(G)$ is an H-eigenvalue of $\A(G)$ if and only if $k$ is even and $G$ is odd-bipartite.
Some other equivalent conditions are summarized in \cite{FY}.
Note that $-\rho(G)$ is an eigenvalue  of $\A(G)$ if and only if $k$ is even and $G$ is odd-colorable \cite{FY}.
So, there exist odd-colorable but non-odd-bipartite hypergraphs \cite{FKT,Ni}, for which $-\rho(G)$ is an N-eigenvalue.
An odd-colorable hypergraph has a symmetric spectrum.
One can refer \cite{FHB} for more results on the spectral symmetry of nonnegative tensors and hypergraphs.
%Hu and Qi \cite{Hu} discuss the H-eigenvectors of zero eigenvalue of $\L(G)$ or $\Q(G)$ related to the even or odd-bipartitions of $G$;
%they also use N-eigenvectors of zero eigenvalue of $\L(G)$ or $\Q(G)$ to discuss some kinds of partition of $G$,
%where an eigenvector is called an \emph{H}-(or \emph{N}-)\emph{eigenvector} if it can (or cannot) be scaled into a real vector.

To our knowledge, the least H-eigenvalue of $\A(G)$ receives little attention except the above work.
%In this paper, we focus on the least H-eigenvalue of $\A(G)$.
%Qi \cite{Qi2} proved that each eigenvalue of $\Q(G)$ of a $k$-uniform hypergraph $G$ has a nonnegative real part by using Gershgorin disks, which implies that
%the least H-eigenvalue of  $\Q(G)$ is at least zero, and is zero if and only if $k$ is even and $G$ is odd-bipartite.
If $k$ is even, then the least H-eigenvalue of $\A(G)$ is a solution of minimum problem over a real unit sphere; see Eq. (\ref{form2}).
So, throughout of this paper, when discussing the least H-eigenvalue of $\A(G)$,
we always assume that \emph{$G$ is connected with even uniformity $k$}.
For convenience, the least H-eigenvalue of $\A(G)$ is simply called the \emph{least eigenvalue} of $G$
 and the corresponding H-eigenvectors are called the \emph{first eigenvectors} of $G$.

In this paper we give a perturbation result on the least H-eigenvalue of $\A(G)$
   when a branch of $G$ attached at one vertex is relocated to another vertex, and
characterize the unique hypergraph whose least H-eigenvalue attains the minimum among all hypergraphs in a certain class of hypergraphs which contain a fixed connected hypergraph.
%Finally we present some upper bounds of the least eigenvalue and prove that zero is the least limit point of the least $\Q$-eigenvalues of connected non-odd-bipartite hypergraphs.
The perturbation result in this paper is a generalization of that on the least eigenvalue of the adjacency matrix of a simple graph in \cite{FWG}.

\section{Preliminaries}

\subsection{Tensors and eigenvalues}
A real {\it tensor} (also called \emph{hypermatrix}) $\A=(a_{i_{1} i_2 \ldots i_{k}})$ of order $k$ and dimension $n$ refers to a
  multi-dimensional array with entries $a_{i_{1}i_2\ldots i_{k}}\in \mathbb{R}$
  for all $i_{j}\in [n]:=\{1,2,\ldots,n\}$ and $j\in [k]$.
Clearly, if $k=2$, then $\A$ is a square matrix of dimension $n$.
The tensor $\A$ is called \emph{symmetric} if its entries are invariant under any permutation of their indices.

 Given a vector $x\in \mathbb{C}^{n}$, $\A x^{k} \in \mathbb{C}$ and $\A x^{k-1} \in \mathbb{C}^n$, which are defined as follows:
\begin{align*}
\A x^{k} & =\sum_{i_1,i_{2},\ldots,i_{k}\in [n]}a_{i_1i_{2}\ldots i_{k}}x_{i_1}x_{i_{2}}\cdots x_{i_k},\\
(\A x^{k-1})_i & =\sum_{i_{2},\ldots,i_{k}\in [n]}a_{ii_{2}\ldots i_{k}}x_{i_{2}}\cdots x_{i_k}, i \in [n].
\end{align*}

Let $\mathcal{I}=(i_{i_1i_2\ldots i_k})$ be the {\it identity tensor} of order $k$ and dimension $n$, that is, $i_{i_{1}i_2 \ldots i_{k}}=1$ if
   $i_{1}=i_2=\cdots=i_{k} \in [n]$ and $i_{i_{1}i_2 \ldots i_{k}}=0$ otherwise.
%In 2005, Lim \cite{Lim} and Qi \cite{Qi2} independently introduced the eigenvalues of tensors.

\begin{definition}[\cite{Lim,Qi2}]
Let $\A$ be a real tensor of order $k$ dimension $n$.
For some $\lambda \in \mathbb{C}$, if the polynomial system $(\lambda \mathcal{I}-\A)x=0$,
or equivalently $\A x^{k-1}=\lambda x^{[k-1]}$, has a solution $x \in \mathbb{C}^{n}\backslash \{0\}$,
then $\lambda $ is called an \emph{eigenvalue} of $\A$ and $x$ is an \emph{eigenvector} of $\A$ associated with $\lambda$,
where $x^{[k-1]}:=(x_1^{k-1}, x_2^{k-1},\ldots,x_n^{k-1})$.
\end{definition}

 In the above definition, $(\lambda,x)$ is called an \emph{eigenpair} of $\A$.
 If $x$ is a real eigenvector of $\A$, surely the corresponding eigenvalue $\lambda$ is real.
In this case, $\lambda$ is called an {\it H-eigenvalue} of $\A$.
A real symmetric tensor $\A$ of even order $k$ is called \emph{positive semidefinite}  (or \emph{positive definite}) if for any $x\in \mathbb{R}^{n} \backslash \{0\}$,
$\A x^k \ge 0$ (or $\A x^k > 0$).
Denote by $\lambda_{\min}(\A)$ the least H-eigenvalue of $\A$.

%\begin{lemma}[\cite{Qi2}, Theorem 5] \label{semi}
%Let $\A$ be a real symmetric tensor of order $k$ and dimension $n$, where $k$ is even.
%Then $\A$ always has H-eigenvalues, and
%$$\lambda_{\min}(\A)=\min\{\A x^k: x\in \mathbb{R}^{n}, \|x\|_k=1\},$$
%where $\|x\|_k=\left(\sum_{i=1}^{n} |x_i^k|\right)^{1 \over k}$.
%Furthermore, $x$ is an optimal solution of the above optimization
%if and only if it is an eigenvector of $\A$ associated with $\lambda_{\min}(\A)$.
%\end{lemma}

\begin{lemma}[\cite{Qi2}, Theorem 5] \label{semi}
Let $\A$ be a real symmetric tensor of order $k$ and dimension $n$, where $k$ is even.
Then the following results hold.

\begin{enumerate}

\item $\A$ always has H-eigenvalues, and $\A$ is positive definite (or positive semidefinite) if and only if its least H-eigenvalue is positive (or nonnegative).

\item $\lambda_{\min}(\A)=\min\{\A x^k: x\in \mathbb{R}^{n}, \|x\|_k=1\}$,
where $\|x\|_k=\left(\sum_{i=1}^{n} |x_i|^k\right)^{1 \over k}$.
Furthermore, $x$ is an optimal solution of the above optimization
if and only if it is an eigenvector of $\A$ associated with $\lambda_{\min}(\A)$.
\end{enumerate}
\end{lemma}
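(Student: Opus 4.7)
The plan is to prove both parts via a variational argument on the compact $k$-norm unit sphere $S = \{x \in \mathbb{R}^n : \|x\|_k = 1\}$. Since $\A x^k$ is a polynomial (hence continuous) and $S$ is compact, $\mu := \min_{x \in S}\A x^k$ is attained at some $x^* \in S$. The bulk of the work is to identify $\mu$ with $\lambda_{\min}(\A)$ and to characterize the minimizers as exactly the first eigenvectors; existence of H-eigenvalues will then fall out as a by-product.

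To identify $\mu$ as an H-eigenvalue I would invoke Lagrange multipliers. Because $k$ is even, $\|x\|_k^k = \sum_i x_i^k$ is a smooth polynomial with gradient $k\, x^{[k-1]}$, which is nonzero on $S$, so the constraint qualification holds. Symmetry of $\A$ gives $\nabla(\A x^k) = k\, \A x^{k-1}$. Thus there exists $\mu' \in \mathbb{R}$ with $\A (x^*)^{k-1} = \mu'\, (x^*)^{[k-1]}$, so $(\mu', x^*)$ is an eigenpair with real eigenvector, proving the existence half of part (1). Contracting against $x^*$ yields $\mu = \A (x^*)^k = \mu'\, \|x^*\|_k^k = \mu'$, so $\mu$ itself is an H-eigenvalue. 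Conversely, for any H-eigenpair $(\lambda, y)$, contracting $\A y^{k-1} = \lambda y^{[k-1]}$ against $y$ and dividing by $\|y\|_k^k$ gives $\lambda = \A (y/\|y\|_k)^k \ge \mu$, so $\mu = \lambda_{\min}(\A)$. The two-sided characterization of optimizers follows: optimality forces the eigenvector relation by Lagrange multipliers, and any unit eigenvector for $\lambda_{\min}(\A)$ achieves $\A x^k = \lambda_{\min}(\A)$ by the same contraction. Finally, the positive (semi)definite clause reduces to the case $\|x\|_k = 1$ by $k$-homogeneity of $\A x^k$, and is then equivalent to $\lambda_{\min}(\A) > 0$ (resp.\ $\ge 0$) via the variational identity.

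The main technical point is ensuring the Lagrange multiplier step applies without fuss, which hinges on $k$ being even: this makes $|x_i|^k = x_i^k$ smooth and $S$ a smooth $(n-1)$-manifold, with $\nabla\|x\|_k^k$ nonvanishing on $S$. For odd $k$ the sphere $S$ would have non-smooth points on the coordinate axes and the absolute value in $\|x\|_k^k$ would need separate handling; more fundamentally, H-eigenvalues can fail to exist for odd-order tensors, so the parity hypothesis is genuinely used and drives every step of the argument. Beyond this observation the remaining work is essentially bookkeeping with the symmetry of $\A$ and with $k$-homogeneity, both of which are routine.
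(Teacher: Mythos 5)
The paper does not prove this lemma; it is quoted verbatim as a citation of [Qi2, Theorem 5], so there is no in-paper proof to compare against. Your Lagrange-multiplier argument on the compact $k$-norm sphere is correct and complete, and it is essentially the original argument in the cited reference (including the correct identification of where evenness of $k$ is used, namely the smoothness and nonvanishing gradient of $\|x\|_k^k$ and the identity $\sum_i x_i^k=\|x\|_k^k$).
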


\subsection{Uniform hypergraphs and eigenvalues}
A {\it hypergraph} $G=(V(G),E(G))$ is a pair consisting of a vertex set $V(G)=\{v_1,v_2,\ldots,v_n\}$ and an edge set $E(G)=\{e_{1},e_2,\ldots,e_m\}$,
where $e_j\subseteq V(G)$ for each $j\in[m]$.
If $|e_j|=k$ for all $j\in[m]$, then $G$ is called a {\it $k$-uniform} hypergraph.
The {\it degree} $d_G(v)$ or simply $d(v)$ of a vertex $v \in V(G)$ is defined as $d(v)=|\{e_{j}:v\in e_{j}\}|$.
%The {\it order} of  $G $ is the cardinality of $V(G)$, denoted by $\nu(G)$, and its {\it size} is the cardinality of $E(G)$, denoted by $\varepsilon(G)$.
A {\it walk} in a $G$ is a sequence of alternate vertices and edges: $v_0e_1v_1e_2 \ldots e_lv_l$,
where ${v_i, v_{i+1}}\in e_i$ for $i = 0, 1, \ldots, l-1$.
A walk is called a {\it path} if all the vertices and edges appeared on the walk are distinct.
A hypergraph $G$ is called {\it connected} if every two vertices of $G$ are connected by a walk or path.

If a hypergraph is both connected and acyclic, it is called a \emph{hypertree}.
The {\it $k$-th power} of a simple graph $H$, denoted by $H^k$,
is obtained from $H$ by replacing each edge (a $2$-set) with a $k$-set by adding $(k-2)$ additional vertices \cite{HQS}.
The $k$-th power of a tree is called \emph{power hypertree}, which is surely a $k$-uniform hypertree.
In particular, the $k$-th power of a star (as a simple graph) with $m$ edges is called a \emph{hyperstar},
denote by $S_m^k$.
In a $k$-th power hypertree $T$, an edge is called a \emph{pendent edge} of $T$ if it contains $k-1$ vertices of degree one,
which are called the \emph{pendent vertices} of $T$.

\begin{lemma}[\cite{Ber1}]\label{tree}
If $G$ is a connected $k$-uniform hypergraph with $n$ vertices and $m$ edges,
then $G$ is a hypertree if and only if $m=\frac{n-1}{k-1}.$
\end{lemma}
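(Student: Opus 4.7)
My plan is to prove both directions by induction on the number of edges $m$, using a pendent-edge argument for the forward implication and a greedy spanning argument for the converse.

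For the forward direction, the base case $m=0$ forces $n=1$, and the identity $m(k-1)=n-1$ holds trivially. For the inductive step, I would first establish that every hypertree with $m\ge 1$ contains a \emph{pendent edge}---an edge $e$ containing at least $k-1$ vertices of degree one. This is shown by taking a longest path $P=v_0 e_1 v_1 \cdots e_l v_l$ and arguing that every vertex of $e_l$ other than $v_{l-1}$ must be a pendent vertex: if some such vertex $u$ lay on another edge, one would either be able to extend $P$ (contradicting maximality) or close a cycle with a portion of $P$ (contradicting acyclicity). Removing $e_l$ together with its $k-1$ pendent vertices yields a sub-hypergraph $G'$ which is still connected and acyclic, hence a hypertree on $m-1$ edges and $n-(k-1)$ vertices. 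The induction hypothesis gives $(m-1)(k-1)=n-k$, and rearranging yields $m(k-1)=n-1$.

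For the converse, assume $G$ is connected with $m(k-1)=n-1$, and suppose for contradiction that $G$ contains a cycle. I would first show via a greedy spanning argument that every connected $k$-uniform hypergraph satisfies $m(k-1)\ge n-1$: starting from any edge (contributing $k$ vertices) and iteratively adding edges that share at least one vertex with the current sub-hypergraph (such an edge exists at each stage by connectivity until all vertices are covered), each new edge contributes at most $k-1$ new vertices, giving $n \le k+(m-1)(k-1)$. Moreover, equality forces each added edge to share exactly one vertex with the current sub-hypergraph, which precludes any cycle. If $G$ were to contain a cycle, deleting an edge of the cycle would give a connected sub-hypergraph with $m-1$ edges on $n$ vertices, violating the inequality; hence $G$ must be acyclic.

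The main obstacle is making the pendent-edge step fully rigorous under the Berge-style notion of acyclicity, in particular the case analysis showing that every non-traversed vertex of $e_l$ in a longest path has degree one. Once this and the monotone counting inequality $m(k-1)\ge n-1$ for connected hypergraphs are established, the rest is routine arithmetic.
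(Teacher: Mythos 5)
The paper gives no proof of this lemma at all --- it is quoted from Berge \cite{Ber1} --- so your argument has to stand on its own; there is no ``paper proof'' to compare against. Your forward direction (longest path $\Rightarrow$ pendent edge $\Rightarrow$ delete the pendent edge and its $k-1$ degree-one vertices $\Rightarrow$ induct) is sound in outline, and you rightly flag that the case analysis for the pendent-edge claim is where the work lies; that part can be completed. Likewise the greedy counting bound $m(k-1)\ge n-1$ for connected $k$-uniform hypergraphs is correct, provided you order \emph{all} $m$ edges so that each meets the union of its predecessors (not merely until the vertices are covered), which connectivity permits.

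The genuine gap is in the converse. The step ``deleting an edge of the cycle would give a connected sub-hypergraph with $m-1$ edges on $n$ vertices'' fails for $k\ge 3$: the deleted edge $e$ may contain vertices lying in no other edge, and these become isolated, so $G-e$ need not be connected on all $n$ vertices. (Take $k=3$, $e_1=\{a,b,c\}$, $e_2=\{a,b,d\}$: this has a cycle of length two, yet deleting $e_1$ isolates $c$.) Hence the inequality cannot be applied to $G-e$ as a whole and the claimed contradiction does not follow. Two ways to repair it: (i) apply the inequality to each connected component of $G-e$ and sum; with $m(k-1)=n-1$ this forces $G-e$ to have at least $k$ components, hence exactly $k$, one per vertex of $e$, contradicting that a cycle through $e$ supplies an $e$-avoiding connection between two distinct vertices of $e$; or (ii) make your first remark rigorous rather than asserted: in an ordering of all edges where equality forces each edge to meet the union of its predecessors in exactly one vertex, the last edge of a putative cycle already has two of its cycle-vertices in that union, a contradiction. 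As written, ``which precludes any cycle'' is stated without justification, and the deletion argument meant to support it is the step that breaks.
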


\begin{definition}[\cite{Hu}]
Let $k$ be even.
A $k$-uniform hypergraph $G=(V,E)$ is called \emph{odd-bipartite},
if there exists a bipartition $\{V_1,V_2\}$ of $V$ such that each edge of $G$ intersects $V_1$ (or $V_2$) in an odd number of vertices (such bipartition is called
an \emph{odd-bipartition} of $G$);
otherwise, $G$ is called \emph{non-odd-bipartite}.
\end{definition}

The odd-bipartite hypergraphs are considered as generalizations of the ordinary bipartite graphs.
%The odd-bipartition is closely related to odd-traversal \cite{Ni}.
The examples of non-odd-bipartite hypergraphs can been found in \cite{KF, Ni}.
Note that the odd-bipartite hypergraphs are also called odd-transversal hypergraphs in \cite{Ni}.
As a special case of generalized power hypergraphs defined in \cite{KF},
  the $k$-uniform hypergraph $G^{k,{k \over 2}}$ is obtained from a simple graph $G$ by blowing up each vertex into an ${k \over 2}$-set and preserving the adjacency relation, where $k$ is even.
It was shown that  $G^{k,{k \over 2}}$ is odd-bipartite if and only if $G$ is bipartite \cite{KF}.

Let $G$ be a $k$-uniform hypergraph on $n$ vertices $v_1,v_2,\ldots,v_n$.
The {\it adjacency tensor} of $G$ \cite{CD} is defined as $\mathcal{A}(G)=(a_{i_{1}i_{2}\ldots i_{k}})$, an order $k$ dimensional $n$ tensor, where
\[a_{i_{1}i_{2}\ldots i_{k}}=\left\{
 \begin{array}{ll}
\frac{1}{(k-1)!}, &  \mbox{if~} \{v_{i_{1}},v_{i_{2}},\ldots,v_{i_{k}}\} \in E(G);\\
  0, & \mbox{otherwise}.
  \end{array}\right.
\]
%
% Let $\mathcal{D}(G)$ be a diagonal tensor of order $k$ and dimension $n$, where $d_{i\ldots i}=d(v_i)$ for $i \in [n]$.
%The tensor $\Q(G)=\mathcal{D}(G)+\A(G)$ is called the {\it signless Laplacian tensor} of $G$ \cite{Qi}.
Observe that the adjacency tensor of a hypergraph is symmetric.

Let $x=(x_1, x_2,\ldots,x_n) \in \mathbb{C}^n$.
Then $x$ can be considered as a function defined on the vertices of $G$,
 that is, each vertex $v_i$ is mapped to $x_i=:x_{v_ i}$.
 If $x$ is an eigenvector of $\A(G)$, then it defines on $G$ naturally, i.e., $x_v$ is the entry of $x$ corresponding to $v$.
 If $G_0$ is a sub-hypergraph of $G$, denote by $x|_{G_0}$ the restriction of $x$ on the vertices of $G_0$, or a subvector of $x$ indexed by the vertices of $G_0$.

 Denote by $E_G(v)$, or simply $E(v)$, the set of edges of $G$ containing $v$.
 For a subset $U$ of $V(G)$, denote $x^U:=\Pi_{v \in U} x_u$.
 Then we have
 \begin{equation}\label{form}
  \A(G)x^k = \sum_{e\in E(G)}kx^e,
\end{equation}
 and for each $v \in V(G)$,
 $$(\A(G)x^{k-1})_v=\sum_{e\in E(v)}x^{e\backslash \{v\}}.$$
So the eigenvector equation $\A(G)x^{k-1}=\la x^{[k-1]}$ is equivalent to that for each $v \in V(G)$,
\begin{equation}\label{eigen}
\la x_v^{k-1}=\sum_{e\in E(v)}x^{e\backslash \{v\}}.
\end{equation}
From Lemma \ref{semi}(2), if $k$ is even, then $\la_{\min}(G):=\lambda_{\min}(\A(G))$ can be expressed as
\begin{equation}\label{form2}
\lambda_{\min}(G)=\min_{x\in \mathbb{R}^{n}, \|x\|_k=1}\sum_{e\in E(G)}kx^e.
\end{equation}

Note than if $k$ is odd, the Eq. (\ref{form2}) does not hold.
The reason is as follows. If $G$ contains at least one edge, then by Perron-Frobenius theorem,
the spectral radius $\rho(\A(G))$ of $\A(G)$ is positive associated with a unit nonnegative eigenvector $x$.
Now $$-\rho(\A(G))\le \lambda_{\min}(G)\le \A(G)(-x)^k=-\A(G)x^k=-\rho(\A(G)),$$
so $\lambda_{\min}(G)=-\rho(\A(G))$, which implies that $k$ is even and $G$ is odd-bipartite \cite{SSW, FY}, a contradiction.

\begin{lemma}\label{xuv}
Let $G$ be a $k$-uniform hypergraph, and $(\lambda,x)$ be an eigenpair of $\A(G)$.
If $E(u)=E(v)$ and $\la \ne 0$, then $x_u^k=x_v^k$.
\end{lemma}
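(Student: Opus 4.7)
The plan is to apply the eigenvector equation (\ref{eigen}) at both vertices $u$ and $v$ and exploit the hypothesis $E(u)=E(v)$ to write the two right-hand sides in terms of a common sum.

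First I would observe that since $E(u)=E(v)$, every edge $e$ in $E(u)=E(v)$ contains both $u$ and $v$, so we may factor:
\begin{equation*}
x^{e\setminus\{v\}} = x_u \cdot x^{e\setminus\{u,v\}}, \qquad x^{e\setminus\{u\}} = x_v \cdot x^{e\setminus\{u,v\}}.
\end{equation*}
Setting $S := \sum_{e\in E(u)} x^{e\setminus\{u,v\}}$, the eigenvector equation (\ref{eigen}) applied at $u$ and at $v$ becomes
\begin{equation*}
\lambda\, x_u^{k-1} = x_v\, S, \qquad \lambda\, x_v^{k-1} = x_u\, S.
\end{equation*}

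Next, to avoid dividing by possibly zero entries, I would multiply the first equation by $x_u$ and the second by $x_v$, obtaining
\begin{equation*}
\lambda\, x_u^k = x_u x_v\, S = \lambda\, x_v^k.
\end{equation*}
Since $\lambda\neq 0$, we may cancel $\lambda$ to conclude $x_u^k = x_v^k$.

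There is no real obstacle here; the only mild subtlety is that one should not divide by $x_u$ or $x_v$ directly, since either may vanish. The symmetric multiplication trick above sidesteps that issue cleanly and handles the degenerate case (if $x_u=0$, the first equation forces $x_v=0$ as well, and the conclusion holds trivially).
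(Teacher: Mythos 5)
Your proof is correct and follows essentially the same route as the paper: the paper likewise multiplies the eigenvector equation at $u$ and at $v$ by $x_u$ and $x_v$ respectively to obtain $\lambda x_u^k=\sum_{e\in E(u)}x^e=\sum_{e\in E(v)}x^e=\lambda x_v^k$ and then cancels $\lambda$. Your further factoring through $x^{e\setminus\{u,v\}}$ is just a more explicit writing of the same common quantity.
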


\begin{proof}
Consider the eigenvector equation of $x$ at $u$ and $v$ respectively,
$$\lambda x_u^k=\sum_{e\in E(u)}x^e, \;\lambda x_v^k=\sum_{e\in E(v)}x^e.$$
As $E[u]=E[v]$, $\sum_{e\in E(u)}x^e=\sum_{e\in E(v)}x^e$.
The result follows.
\end{proof}

%Qi \cite{Qi} show that the least H$^+$-eigenvalue of $\Q(G)$ of a $k$-uniform hypergraph $G$ is exactly the minimum degree of $G$, denoted by $\delta(G)$.
%If $k$ is even, then $\lambda_{\min}(G) \le \delta(G)$.
%We give an improvement as follows.

\begin{lemma}\label{mindegree}
Let $G$ be a $k$-uniform hypergraph with at least one edge, where $k$ is even.
Let $G_0$ be a induced sub-hypergraph of $G$.
 Then $\lambda_{\min}(G) \le \la_{\min}(G_0)$.
 In particular, $\lambda_{\min}(G) \le -1$.
\end{lemma}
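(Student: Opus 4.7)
The plan is to exploit the variational characterization in Lemma~\ref{semi}(2). Let $y\in \mathbb{R}^{V(G_0)}$ be a unit first eigenvector of $G_0$, so that $\A(G_0)y^k = \la_{\min}(G_0)$ with $\|y\|_k=1$. I would then extend $y$ to a vector $x\in \mathbb{R}^{V(G)}$ by setting $x_v = y_v$ for $v \in V(G_0)$ and $x_v = 0$ otherwise. This produces a feasible test vector satisfying $\|x\|_k = \|y\|_k = 1$.

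Next, I would expand $\A(G)x^k$ via (\ref{form}) and observe that whenever an edge $e \in E(G)$ meets $V(G)\setminus V(G_0)$, the product $x^e$ vanishes. Thus only edges $e \subseteq V(G_0)$ contribute, and because $G_0$ is an \emph{induced} sub-hypergraph, these are precisely the edges of $G_0$. Therefore
\[
\A(G)x^k \;=\; \sum_{e\in E(G_0)} k\, x^e \;=\; \sum_{e\in E(G_0)} k\, y^e \;=\; \A(G_0)y^k \;=\; \la_{\min}(G_0),
\]
and Lemma~\ref{semi}(2) applied to $\A(G)$ with the feasible point $x$ yields $\la_{\min}(G) \le \A(G)x^k = \la_{\min}(G_0)$.

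For the ``in particular'' claim, I would take $G_0$ to be the sub-hypergraph of $G$ induced by the $k$ vertices of any single edge $e=\{v_1,\ldots,v_k\}$ of $G$; since $G$ is a simple hypergraph, this $G_0$ carries the one edge $e$. The reduced optimization
\[
\la_{\min}(G_0) \;=\; \min_{\|y\|_k=1}\, k\, y_{v_1}y_{v_2}\cdots y_{v_k}
\]
is handled by AM--GM applied to $|y_{v_1}|^k,\ldots,|y_{v_k}|^k$: this gives $|y_{v_1}\cdots y_{v_k}| \le \frac{1}{k}\sum_{i=1}^k |y_{v_i}|^k = \frac{1}{k}$, and choosing an odd number of negative signs (possible since $k\ge 2$) attains the lower bound $-1$. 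Combined with the first part, this yields $\la_{\min}(G)\le \la_{\min}(G_0)=-1$.

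The argument is essentially a one-line variational inequality, so I do not foresee a serious obstacle; the only point requiring care is that ``induced'' is used precisely to ensure that every edge of $G$ contained in $V(G_0)$ is itself an edge of $G_0$, which is what makes the collapse $\sum_{e\in E(G)}kx^e = \sum_{e\in E(G_0)}ky^e$ valid.
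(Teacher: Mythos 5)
Your proposal is correct and follows essentially the same route as the paper: extend a unit first eigenvector of $G_0$ by zero to $G$ and apply the variational characterization of Lemma~\ref{semi}(2), then specialize $G_0$ to a single edge for the bound $\lambda_{\min}(G)\le -1$. The paper simply asserts that a single edge has least eigenvalue $-1$, whereas you verify it via AM--GM; otherwise the arguments coincide.
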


\begin{proof}
Let $x$ be a unit first eigenvector of $G_0$.
Define $\bar{x}$ on $G$ such that $\bar{x}(v)=x(v)$ if $v \in V(G_0)$ and $\bar{x}(v)=0$ otherwise.
Then by Lemma \ref{semi},
$$\la_{\min}(G_0)=\A(G_0) x^k = \A(G) \bar{x}^k \ge \min_{x\in \mathbb{R}^{n}, \|x\|_k=1} \A(G) x^k=\lambda_{\min}(G).$$
If letting $G_0$ be an edge, then $\la_{\min}(G_0)=-1$. The result follows.
\end{proof}

\section{Properties of the first eigenvectors}
We will give some properties of the first eigenvectors of a connected $k$-uniform $G$.
We stress that $k$ is \emph{even} in this and the following section.

Let $G_1$, $G_2$ be two vertex-disjoint hypergraphs, and let $v_1\in V(G_1), v_2\in V(G_2)$.
The \emph{coalescence} of $G_1$, $G_2$ with respect to $v_1, v_2$,
   denoted by $G_1(v_1)\diamond G_2(v_2)$, is obtained from $G_1$, $G_2$ by identifying $v_1$ with $v_2$ and forming a new vertex $u$.
The graph $G_1(v_1)\diamond G_2(v_2)$ is also written as $G_1(u)\diamond G_2(u)$.
If a connected hypergraph $G$ can be expressed
in the form $G=G_1(u)\diamond G_2(u)$, where $G_1$, $G_2$ are both nontrivial and connected, then 
$u$ is called a \emph{cut vertex} of $G$, and $G_1$ is called a \emph{branch} of $G$ with \emph{root} $u$. 
Clearly $G_2$ is also a branch of $G$ with root $u$ in the above definition.
%Let $x$ be a vector defined on $G$. A branch $H$ of $G$ is called a \emph{zero branch} with respect to $x$ if $x_v=0$ for all $v\in V(H)$; otherwise it
%is called a \emph{nonzero branch} with respect to $x$.

\begin{lemma}\label{prop1}
Let $G=G_0(u)\diamond H(u)$ be a connected $k$-uniform hypergraph.
Let $x$ be a first eigenvector of $G$. Then the following results hold.

\begin{enumerate}
  \item  If $H$ is odd-bipartite, then $x^e \le 0$ for each $e\in E(H)$, and there exists a first eigenvector of $G$
  　　　　　　　　　　such that it is nonnegative on one part and nonpositive on the other part for any odd-bipartition of $H$.
  \item  If $x_u=0$, then $\sum_{e\in E_{G_0}(u)}x^{e\backslash\{u\}}=\sum_{e\in E_{H}(u)}x^{e\backslash\{u\}}=0$.
  If further $H$ is odd-bipartite, then $x^{e\backslash\{u\}}=0$ for each $e \in E_H(u)$.
  \end{enumerate}
\end{lemma}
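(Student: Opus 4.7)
\emph{Part (1).} The plan is a sign-flipping construction. Fix the odd-bipartition $\{V_1,V_2\}$ of $H$, and first replace $x$ by $-x$ if needed (still a first eigenvector since $k$ is even) so that $x_u$ carries the sign the bipartition rule will assign at $u$: namely $x_u\le 0$ when $u\in V_1$, and $x_u\ge 0$ when $u\in V_2$. Now define $\tilde x_v=-|x_v|$ for $v\in V_1$, $\tilde x_v=|x_v|$ for $v\in V_2$, and $\tilde x_v=x_v$ for $v\in V(G_0)\setminus\{u\}$. By the chosen sign convention at $u$, $\tilde x$ agrees with $x$ on $V(G_0)$, so $\tilde x^e=x^e$ for $e\in E(G_0)$; and because $|e\cap V_1|$ is odd for every $e\in E(H)$ and $k$ is even, $\tilde x^e=-|x^e|$ for $e\in E(H)$. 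Hence
\begin{equation*}
\A(G)\tilde x^k - \A(G) x^k \;=\; k\sum_{e\in E(H)}\bigl(-|x^e|-x^e\bigr)\;\le\;0.
\end{equation*}
Since $\|\tilde x\|_k=\|x\|_k=1$, Lemma \ref{semi} gives $\A(G)\tilde x^k\ge\lambda_{\min}(G)=\A(G) x^k$, so equality holds and each summand above vanishes. Thus $x^e=-|x^e|\le 0$ for each $e\in E(H)$, and $\tilde x$ itself is a first eigenvector of $G$ with $\tilde x\le 0$ on $V_1$ and $\tilde x\ge 0$ on $V_2$.

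\emph{Part (2), vanishing of each sum.} Assume $x_u=0$, and write $z=x|_{V(G_0)}$ and $y=x|_{V(H)}$, each extended by zero to the complementary vertex set. Because $x_u=0$, every edge through $u$ contributes zero to $\A(G) x^k$, so
\begin{equation*}
\A(G) x^k \;=\; \A(G_0) z^k+\A(H) y^k, \qquad \|x\|_k^k \;=\; \|z\|_k^k+\|y\|_k^k.
\end{equation*}
Combining the variational lower bounds $\A(G_0) z^k\ge\lambda_{\min}(G_0)\|z\|_k^k$ and $\A(H) y^k\ge\lambda_{\min}(H)\|y\|_k^k$ from Lemma \ref{semi} with the monotonicity $\lambda_{\min}(G_0),\lambda_{\min}(H)\ge\lambda_{\min}(G)$ of Lemma \ref{mindegree} yields a chain of inequalities that must collapse to equalities. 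When $\|z\|_k>0$, this forces $z/\|z\|_k$ to be a first eigenvector of $\A(G_0)$, and the eigenvector equation (\ref{eigen}) at $u$ gives $\sum_{e\in E_{G_0}(u)} x^{e\setminus\{u\}}=\lambda_{\min}(G_0)\,z_u^{k-1}=0$; when $\|z\|_k=0$ the sum vanishes trivially because each factor $x_v$ in it is zero. The analogous argument with $y$ proves $\sum_{e\in E_H(u)} x^{e\setminus\{u\}}=0$.

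\emph{Part (2), odd-bipartite case, and main obstacle.} If in addition $H$ is odd-bipartite with bipartition $\{V_1,V_2\}$, apply the sign-flip of Part (1) to $y$ viewed as a first eigenvector of $\A(H)$ to produce $\tilde y$, also a first eigenvector of $\A(H)$, with $\tilde y_u=0$. For each $e\in E_H(u)$ the product $\tilde y^{e\setminus\{u\}}$ carries the sign $(-1)^{|V_1\cap(e\setminus\{u\})|}$, which is independent of $e$ (it depends only on whether $u\in V_1$ or $u\in V_2$, using that $|V_1\cap e|$ is odd). The eigenvector equation for $\tilde y$ at $u$ in $H$ thus equates a constant-sign sum to zero, forcing each $\tilde y^{e\setminus\{u\}}=0$; since $|\tilde y_v|=|x_v|$ on $V(H)$, we conclude $x^{e\setminus\{u\}}=0$. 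The delicate step throughout is the sign reconciliation at the cut vertex $u$ in Part (1): ensuring that the sign-flipped vector $\tilde x$ is well-defined on all of $V(G)$ and is still a first eigenvector of $\A(G)$ (not merely of $\A(H)$), which is exactly what the preliminary replacement $x\to\pm x$ together with the orientation of the bipartition at $u$ accomplishes.
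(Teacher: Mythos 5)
Your proof is correct, and Part (1) is essentially the paper's own argument: the same sign-flip $\tilde x$ matched to $x$ at the cut vertex, compared against the variational characterization of Lemma \ref{semi}(2). (The preliminary replacement $x\mapsto -x$ is harmless for the stated conclusion since $k$ is even, so $(-x)^e=x^e$.) Where you genuinely diverge is the first claim of Part (2). The paper simply negates $x$ on $V(H)$ — well-defined because $x_u=0$ and still a first eigenvector because $k$ is even — writes the eigenvector equation (\ref{eigen}) at $u$ for both $x$ and the negated vector, and adds and subtracts the two identities to split $0=\sum_{e\in E_{G_0}(u)}x^{e\backslash\{u\}}+\sum_{e\in E_H(u)}x^{e\backslash\{u\}}$ into two vanishing halves; this takes two lines. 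You instead decompose the Rayleigh quotient across the cut vertex, invoke Lemma \ref{mindegree} to collapse the resulting chain of inequalities, and conclude that $x|_{G_0}$ and $x|_{H}$, when nonzero, are first eigenvectors of the branches, from whose own eigenvector equations at $u$ the two sums vanish. Your route is longer but yields more: it shows en passant that $\lambda_{\min}(G_0)=\lambda_{\min}(G)$ and that the restriction is a branch first eigenvector, which is essentially Corollary \ref{prop3}(2), a fact the paper only derives afterwards from this lemma; there is no circularity since you rely only on Lemmas \ref{semi} and \ref{mindegree}, both of which precede the statement. For the final claim of Part (2) the two arguments coincide in substance — a constant-sign sum over $E_H(u)$ forced to vanish termwise — the only difference being that you run the eigenvector equation inside $H$ for $\tilde y$ while the paper runs it inside $G$ for $\tilde x$ and quotes the first claim of (2).
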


\begin{proof}
Let $\{U,W\}$ be an odd-bipartition of $H$, where $u\in U$.
Without loss of generality, we assume that $\|x\|_k=1$ and $x_u\ge 0$.
Let $\tilde{x}$ be such that
\[
\tilde{x}_v=
\begin{cases}
x_v,&\text{if } v\in V(G_0)\backslash\{u\};\\
|x_v|,&\text{if } v\in U;\\
-|x_v|,&\text{if } v\in W.
\end{cases}
\]
Note that $\|\tilde{x}\|_k=\|x\|_k=1$, and for each $e \in E(H)$, $\tilde{x}^e \le x^e$ with equality if and only if $x^e \le 0$.
%\begin{itemize}
%    \item [{\rm(a)}] $\tilde{x}_e^{k}=x_e^{k}$.
%  \item [{\rm(b)}] $\tilde{x}^e \le x^e$ with equality if and only if $x^e \le 0$.
%\end{itemize}

We prove the assertion (1) by a contradiction.
Suppose that there exists an edge $e \in E(H)$ such that $x^{e}>0$.
Then $\tilde{x}^{e}<x^{e}$.
By Eq. (\ref{form2}), we have
\[
\lambda_{\min}(G)\le  \A(G)\tilde{x}^k < \A(G)x^k=\lambda_{\min}(G),
\]
a contradiction.
So $x^e \le  0$ for each $e\in E(H)$, and $\tilde{x}$ is also a first eigenvector as $\A(G)\tilde{x}^k = \A(G)x^k$.
The assertion (1) follows.

For the assertion (2), let $\bar{x}$ be such that
\[
\bar{x}_v=
\begin{cases}
x_v,&\text{if } v\in V(G_0)\backslash\{u\};\\
-x_v,&\text{if } v\in V(H).\\
\end{cases}
\]
Then $\bar{x}$ is also a first eigenvector of $G$ as $\A(G)\bar{x}^k = \A(G)x^k$.
Note that $x_u=0$ and consider the eigenvector equation Eq. (\ref{eigen}) of $x$ and $\bar{x}$ at $u$, respectively.
$$\lambda_{\min}(G) x_u^{k-1}=0=\sum_{e\in E_{G_0}(u)}x^{e\backslash \{u\}}+\sum_{e\in E_H(u)}x^{e\backslash \{u\}},$$
\begin{eqnarray*}
  \lambda_{\min}(G) \bar{x}_u^{k-1}=0 &=& \sum_{e\in E_{G_0}(u)}\bar{x}^{e\backslash \{u\}}+\sum_{e\in E_H(u)}\bar{x}^{e\backslash \{u\}} \\&=& \sum_{e\in E_{G_0}(u)}x^{e\backslash \{u\}}-\sum_{e\in E_H(u)}x^{e\backslash \{u\}}.
\end{eqnarray*}
Thus $\sum_{e\in E_{G_0}(u)}x^{e\backslash\{u\}}=\sum_{e\in E_H(u)}x^{e\backslash \{u\}}=0$.

If further $H$ is odd-bipartite, applying the above result to $\tilde{x}$ (also a first eigenvector of $G$), we have
$\sum_{e\in E_H(u)}\tilde{x}^{e\backslash \{u\}}=0$.
As $\tilde{x}^{e\backslash \{u\}} \le 0$ for each edge $e\in E_H(u)$, $\tilde{x}^{e\backslash\{u\}}=0$ for each $e \in E_H(u)$.
The assertion (2) follows by the definition of $\tilde{x}$.
\end{proof}

\begin{lemma}\label{prop2}
Let $G=G_0(u)\diamond H(u)$ be a connected $k$-uniform hypergraph.
Then $$\lambda_{\min}(G_0)\ge \lambda_{\min}(G),$$
with equality if and only if for any first eigenvector $y$ of $G_0$, $y_u=0$ and $\tilde{y}$ is a first eigenvector of $G$,
where $\tilde{y}$ is defined by
\[
\tilde{y}_v=
\begin{cases}
y_v,&\text{if } v\in V(G_0);\\
0,&\text{otherwise.}
\end{cases}
\]
\end{lemma}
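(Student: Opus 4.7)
The plan is to obtain the inequality from a zero-extension argument and then pin down the equality case with a first-order perturbation that exploits the parity of $k-1$.

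First I would take a unit first eigenvector $y$ of $G_0$ and form $\tilde{y}$ as in the statement. For every $e\in E(H)$ at least one vertex of $e$ lies in $V(H)\setminus\{u\}$, where $\tilde{y}$ vanishes, so $\tilde{y}^e=0$; hence $\A(G)\tilde{y}^k=\A(G_0)y^k=\la_{\min}(G_0)$. Combined with $\|\tilde{y}\|_k=\|y\|_k=1$, Eq.~(\ref{form2}) then gives $\la_{\min}(G)\le \la_{\min}(G_0)$ (this is essentially a specialisation of Lemma~\ref{mindegree}, but the explicit test vector already exposes the equality case).

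The $(\Leftarrow)$ direction of the equality characterisation is immediate from the same identity $\A(G)\tilde{y}^k=\A(G_0)y^k$. For $(\Rightarrow)$, assume $\la_{\min}(G_0)=\la_{\min}(G)$ and pick an arbitrary unit first eigenvector $y$ of $G_0$. The identity now shows that $\tilde{y}$ attains the minimum in (\ref{form2}) for $G$, so Lemma~\ref{semi}(2) upgrades $\tilde{y}$ to a first eigenvector of $G$.

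The main step is to show $y_u=0$, and I do this by contradiction. If $y_u\ne 0$, I perturb $\tilde{y}$ by assigning the common value $t$ to every vertex of $V(H)\setminus\{u\}$ while keeping the values on $V(G_0)$ fixed, producing a vector $\bar{y}$. An edge-by-edge expansion yields
\begin{align*}
\A(G)\bar{y}^k &= \la_{\min}(G_0) + k\,|E_H(u)|\,y_u\,t^{k-1} + O(t^k),\\
\|\bar{y}\|_k^k &= 1 + O(t^k),
\end{align*}
so the Rayleigh quotient $\A(G)\bigl(\bar{y}/\|\bar{y}\|_k\bigr)^k$ equals $\la_{\min}(G_0)+k\,|E_H(u)|\,y_u\,t^{k-1}+O(t^k)$. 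Since $k$ is even, $k-1$ is odd; since $H$ is nontrivial and connected with root $u$, $|E_H(u)|\ge 1$. Choosing $t$ of sufficiently small magnitude with sign opposite to that of $y_u$ makes $k\,|E_H(u)|\,y_u\,t^{k-1}$ a negative quantity of order $|t|^{k-1}$ that dominates the $O(|t|^k)$ remainder, pushing the Rayleigh quotient strictly below $\la_{\min}(G_0)=\la_{\min}(G)$ and contradicting (\ref{form2}). Hence $y_u=0$. The hard part is precisely this perturbation: it relies on the evenness of $k$, which makes $t^{k-1}$ a signed, odd-order term, and on $|E_H(u)|\ge 1$; without either hypothesis no contradiction could be drawn.
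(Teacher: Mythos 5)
Your proof is correct, and it takes a genuinely different route from the paper's. The paper works with a single test vector $\bar y$ that agrees with $y$ on $V(G_0)$, equals $-y_u$ on the $k-1$ remaining vertices of one edge $e_0\in E_H(u)$, and vanishes elsewhere, and it reads off both the inequality and the equality condition from the one chain $\lambda_{\min}(G)\le(\lambda_{\min}(G_0)-ky_u^k)/(1+(k-1)y_u^k)\le\lambda_{\min}(G_0)$. You instead split the work: the zero extension $\tilde y$ gives the inequality (essentially Lemma~\ref{mindegree}); sufficiency and the upgrade of an optimal $\tilde y$ to a first eigenvector come from Lemma~\ref{semi}(2) exactly as in the paper; and the necessity of $y_u=0$ comes from a first-order perturbation whose leading correction $k\,|E_H(u)|\,y_u\,t^{k-1}$ is an odd power of $t$ and hence can always be driven negative. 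This buys you something real: the paper's final ratio inequality is equivalent to $y_u^k\bigl(k+(k-1)\lambda_{\min}(G_0)\bigr)\ge 0$, which fails whenever $y_u\ne 0$ and $\lambda_{\min}(G_0)<-k/(k-1)$ (already for $G_0$ a hyperstar with enough edges), so that chain does not by itself deliver the bound, let alone the claim that equality in it forces $y_u=0$; your two-step argument is insensitive to the size of $\lambda_{\min}(G_0)$ and so closes that gap. The one hypothesis you lean on, $|E_H(u)|\ge 1$, is justified because $H$ is a nontrivial connected branch rooted at $u$, and you note this explicitly.
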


\begin{proof}
Suppose that $y$ is a first eigenvector of $G_0$, $\|y\|_k=1$.
Let $e_0$ be an edge of $H$ containing $u$.
Define $\bar{y}$ by
\[
\bar{y}_v=
\begin{cases}
y_v,&\text{if } v\in V(G_0);\\
-y_{u},&\text{if } v\in e_0 \backslash \{u\};\\
0,&\text{otherwise.}
\end{cases}
\]
Then $\|\bar{y}\|^k_k=1+(k-1)y_u^k$, and
\begin{eqnarray*}
  \A(G)\bar{y}^k &=& \sum_{e\in E(G_0)}k\bar{y}^e+\sum_{e\in E(H)}k\bar{y}^e\\
            &=& \A(G_0)y^k -ky_u^k\\
            &=& \lambda_{\min}(G_0)-ky_u^k.
\end{eqnarray*}
By Eq. (\ref{form2}), we have
$$\lambda_{\min}(G)\le  \frac{\A(G)\bar{y}^k} {\|\bar{y}\|_k^k}=\frac{\lambda_{\min}(G_0)-ky_u^k} {1+(k-1)y_u^k}\le \lambda_{\min}(G_0),$$
where the first equality holds if and only if $\bar{y}$ is also a first eigenvector of $G$, and the second equality holds if and only if $y_u=0$.
The result now follows.
\end{proof}

\begin{coro}\label{prop3}
Let $G=G_0(u)\diamond H(u)$ be a connected $k$-uniform hypergraph.
\begin{enumerate}
  \item If $y$ is a first eigenvector of $G_0$ with $y_u\neq 0$, then
$$\lambda_{\min}(G_0)>\lambda_{\min}(G).$$
  \item If $x$ is a first eigenvector of $G$ such that $x_u=0$ and $x|_{G_0} \ne 0$, then
$$\lambda_{\min}(G_0)=\lambda_{\min}(G).$$
\end{enumerate}
\end{coro}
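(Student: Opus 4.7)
The plan is to derive both assertions directly from Lemma \ref{prop2} together with Lemma \ref{prop1}(2).

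For assertion (1), Lemma \ref{prop2} already gives the weak inequality $\lambda_{\min}(G_0) \ge \lambda_{\min}(G)$, and says that equality forces every first eigenvector $y$ of $G_0$ to satisfy $y_u = 0$. The hypothesis furnishes a first eigenvector $y$ of $G_0$ with $y_u \ne 0$, so the equality condition is violated and the inequality is strict. This is a one-line contrapositive argument.

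For assertion (2), the strategy is to show that $y := x|_{G_0}$ is a (nonzero) real eigenvector of $\A(G_0)$ corresponding to the H-eigenvalue $\lambda_{\min}(G)$; this will yield $\lambda_{\min}(G_0) \le \lambda_{\min}(G)$, which combined with Lemma \ref{prop2} gives equality. The vector $y$ is nonzero by hypothesis, so I only need to verify the eigenvalue equation $\A(G_0) y^{k-1} = \lambda_{\min}(G)\, y^{[k-1]}$ componentwise, using the fact that $V(G_0) \cap V(H) = \{u\}$. For any $v \in V(G_0) \setminus \{u\}$, the edges of $G$ incident to $v$ are exactly the edges of $G_0$ incident to $v$, so the eigenvalue equation for $x$ at $v$ on $G$ is literally the eigenvalue equation for $y$ at $v$ on $G_0$.

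The only delicate vertex is $u$, where $E_G(u) = E_{G_0}(u) \sqcup E_H(u)$. Since $x_u = 0$, Lemma \ref{prop1}(2) gives $\sum_{e \in E_{G_0}(u)} x^{e \setminus \{u\}} = 0$, so
\[
(\A(G_0) y^{k-1})_u \;=\; \sum_{e \in E_{G_0}(u)} y^{e \setminus \{u\}} \;=\; 0 \;=\; \lambda_{\min}(G)\, y_u^{k-1},
\]
as $y_u = x_u = 0$. This confirms that $y$ is a real eigenvector of $\A(G_0)$ with eigenvalue $\lambda_{\min}(G)$, hence $\lambda_{\min}(G_0) \le \lambda_{\min}(G)$, and the reverse inequality from Lemma \ref{prop2} closes the argument. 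There is no genuine obstacle here; the only point requiring any care is the vertex $u$, where Lemma \ref{prop1}(2) does the work of splitting the vanishing sum into its $G_0$- and $H$-parts individually.
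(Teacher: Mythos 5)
Your proof is correct and follows essentially the same route as the paper's: assertion (1) falls out of the equality condition in Lemma \ref{prop2}, and assertion (2) is obtained by invoking Lemma \ref{prop1}(2) at the coalescence vertex $u$ to verify that $x|_{G_0}$ is an eigenvector of $\mathcal{A}(G_0)$ for the eigenvalue $\lambda_{\min}(G)$, then closing with Lemma \ref{prop2}. Your write-up is in fact slightly more explicit than the paper's, which leaves the componentwise verification and the final inequality $\lambda_{\min}(G_0)\le\lambda_{\min}(G)$ implicit.
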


\begin{proof}
By Lemma \ref{prop2}, we can get the assertion (1) immediately.
Let $x$ be a first eigenvector of $G$ as in (2).
By Lemma \ref{prop1}(2), $\sum_{e\in E_{G_0}(u)}x^{e\backslash\{u\}}=0$.
Considering the eigenvector equation (\ref{eigen}) of $x$ at each vertex of $V(G_0)$, we have
$$\A(G_0)(x|_{G_0})^{k-1}=\lambda_{\min}(G)x^{k-1}.$$
So $x|_{G_0}$, the restriction of $x$ on $G_0$, is an eigenvector of $\A(G_0)$ associated with the eigenvalue $\lambda_{\min}(G)$.
The result follows by Lemma \ref{prop2}.
\end{proof}

\begin{lemma}\label{bh}
Let $G=G_0(u)\diamond H(u)$ be a connected $k$-uniform hypergraph.
If $x$ is a first eigenvector of $G$, then
\begin{equation}\label{beta}
\alpha_H(x):=\sum_{e\in E_H(u)} x^e \le 0.
\end{equation}
Furthermore, if $\alpha_H(x)=0$ and $x|_{G_0} \ne 0$, then $x_u=0$ and $\lambda_{\min}(G_0)=\lambda_{\min}(G)$;
or equivalently if $x_u \ne 0$, then $\alpha_H(x)<0$.
\end{lemma}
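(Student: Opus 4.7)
The plan is to exploit the even parity of $k$ via a sign-flip on $V(H)\setminus\{u\}$. Define $\bar x$ by $\bar x_v=x_v$ for $v\in V(G_0)$ and $\bar x_v=-x_v$ for $v\in V(H)\setminus\{u\}$. Since $k$ is even, $\|\bar x\|_k=\|x\|_k$. A case analysis on edges shows: for $e\in E(G_0)$, $\bar x^e=x^e$; for $e\in E(H)$ with $u\notin e$, all $k$ factors are negated and $k$ is even so $\bar x^e=x^e$; for $e\in E_H(u)$, exactly $k-1$ factors are negated so $\bar x^e=-x^e$. Combining via (\ref{form}) yields
$$\A(G)\bar x^k=\A(G)x^k-2k\,\alpha_H(x).$$
Because $x$ achieves the minimum of $\A(G)y^k$ over unit vectors by Lemma~\ref{semi}(2) and $\|\bar x\|_k=\|x\|_k$, we must have $\A(G)\bar x^k\ge\A(G)x^k$, giving $\alpha_H(x)\le 0$.

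For the equality clause, assume $\alpha_H(x)=0$ and $x|_{G_0}\neq 0$. Then $\A(G)\bar x^k=\A(G)x^k$, so $\bar x$ is also a first eigenvector of $G$. Writing the eigenvector equation (\ref{eigen}) at the cut vertex $u$ for both $x$ and $\bar x$, using $\bar x_u=x_u$, $\bar x^{e\setminus\{u\}}=x^{e\setminus\{u\}}$ for $e\in E_{G_0}(u)$, and $\bar x^{e\setminus\{u\}}=-x^{e\setminus\{u\}}$ for $e\in E_H(u)$, and adding the two equations cancels the $H$-side contribution and yields $\lambda_{\min}(G)\,x_u^{k-1}=\sum_{e\in E_{G_0}(u)}x^{e\setminus\{u\}}$. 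Combined with the eigenvector equations of $x$ at the remaining vertices of $V(G_0)$ (which involve only edges of $G_0$), this shows that the restriction $x|_{G_0}$ satisfies $\A(G_0)(x|_{G_0})^{k-1}=\lambda_{\min}(G)(x|_{G_0})^{[k-1]}$. Since $x|_{G_0}\neq 0$, $\lambda_{\min}(G)$ is an H-eigenvalue of $\A(G_0)$, so $\lambda_{\min}(G_0)\le\lambda_{\min}(G)$; together with Lemma~\ref{prop2} this forces $\lambda_{\min}(G_0)=\lambda_{\min}(G)$ and makes $x|_{G_0}$ a first eigenvector of $G_0$.

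Finally, if $x_u\ne 0$, then Corollary~\ref{prop3}(1) applied to the first eigenvector $y:=x|_{G_0}$ of $G_0$ (which has $y_u\neq 0$) gives $\lambda_{\min}(G_0)>\lambda_{\min}(G)$, contradicting the equality just obtained; hence $x_u=0$. The main technical point I anticipate is this equality case: the inequality is a one-line consequence of the sign-flip identity, but extracting $x_u=0$ requires coupling the sign-flip with the eigenvector equation at $u$ to isolate $x|_{G_0}$ as an eigenvector of $\A(G_0)$, and then invoking the monotonicity from Lemma~\ref{prop2} and Corollary~\ref{prop3}(1) rather than arguing directly at $u$.
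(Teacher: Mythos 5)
Your proof is correct, and for the main inequality it takes a genuinely different route from the paper. The paper shifts attention to the sub-hypergraph: it invokes Lemma \ref{prop2} and Lemma \ref{semi}(1) to conclude that $\A(G_0)-\lambda_{\min}(G)\I$ is positive semidefinite, and then computes $0\le(\A(G_0)-\lambda_{\min}(G)\I)(x|_{G_0})^k=-x_u\sum_{e\in E_H(u)}x^{e\setminus\{u\}}=-\alpha_H(x)$ directly from the eigenvector equations. You instead stay entirely inside $G$ and use the sign-flip test vector $\bar x$ together with the parity of $k$, getting $\A(G)\bar x^k=\A(G)x^k-2k\,\alpha_H(x)$ and concluding by optimality of $x$ in Eq.~(\ref{form2}); this is more elementary in that it needs only Lemma \ref{semi}(2) and not the positive semidefiniteness of the shifted sub-tensor (and hence not Lemma \ref{prop2} for this half). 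For the equality clause the two arguments converge: both reduce to showing that $(\lambda_{\min}(G),x|_{G_0})$ is an eigenpair of $\A(G_0)$ and then invoking Lemma \ref{prop2} and Corollary \ref{prop3}(1); your version is slightly cleaner here because averaging the eigenvector equations of $x$ and $\bar x$ at $u$ removes the $H$-contribution uniformly, without the paper's case split on whether $x_u=0$ (where it appeals to Corollary \ref{prop3}(2)) or $x_u\ne 0$ (where it divides $\alpha_H(x)=x_u\sum_{e\in E_H(u)}x^{e\setminus\{u\}}$ by $x_u$). The only cosmetic point is the implicit normalization $\|x\|_k=1$, which is harmless since $\alpha_H(tx)=t^k\alpha_H(x)$ and $k$ is even.
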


\begin{proof}
Let $\la:=\la_{\min}(G)$.
By Eq. (\ref{eigen}),
for each $v\in V(G_0)\backslash \{u\}$,
\begin{equation}\label{sem}
((\A(G)-\lambda \I) x^{k-1})_v= ((\A(G_0)-\lambda\I)(x|_{G_0})^{k-1})_v=0.
\end{equation}
For the vertex $u$,
\begin{eqnarray*}
\lambda x_u^{k-1} =(\A(G)x^{k-1})_u &= & \sum_{e\in E_{G_0}(u)}x^{e\backslash \{u\}}+ \sum_{e\in E_H(u)} x^{e\backslash \{u\}}\\
&= & (\A(G_0)(x|_{G_0})^{k-1})_u + \sum_{e\in E_H(u)} x^{e\backslash \{u\}}.
%((\Q(G_0)-\lambda\I)x|_{G_0})_u &=& d_{G_0}(u)x_u^{k-1}+ \sum_{e\in E_G(u)}x^{e\backslash \{u\}}-\lambda x_u^{k-1}\\
\end{eqnarray*}
So,
\begin{equation}\label{u}
((\A(G_0)-\lambda\I)(x|_{G_0})^{k-1})_u=-\sum_{e\in E_H(u)} x^{e\backslash \{u\}}.
\end{equation}

By Lemma \ref{prop2} and Lemma \ref{semi}(1), $\A(G_0)-\lambda\I$ is positive semidefinite.
Then $(\A(G_0)-\lambda\I)y^k \ge 0$ for any real and nonzero $y$.
So, by Eq. (\ref{sem}) and Eq. (\ref{u}), we have
\begin{eqnarray*}
0 \le (\A(G_0)-\lambda\I)(x|_{G_0})^k &=& (x|_{G_0})^\top ((\A(G_0)-\lambda\I)(x|_{G_0})^{k-1} \\
                                        &=& -x_u  \sum_{e\in E_H(u)} x^{e\backslash \{u\}} \\
                                         &=& -\alpha_H(x).
\end{eqnarray*}
So we have $\alpha_H(x)\le 0$.

Suppose that $\alpha_H(x)=0$ and $x|_{G_0} \ne 0$.
If $x_u=0$, by Corollary \ref{prop3}(2), $\lambda_{\min}(G_0)=\lambda_{\min}(G)$.
If $x_u\neq 0$, then $\sum_{e\in E_H(u)} x^{e\backslash \{u\}}=0$.
By Eq. (\ref{sem}) and Eq. (\ref{u}), $(\lambda_{\min}(G),x|_{G_0})$ is an eigenpair of $\A(G_0)$, implying that $\lambda_{\min}(G)=\lambda_{\min}(G_0)$ by Lemma \ref{prop2}.
However, $(x|_{G_0})_u =x_u \ne 0$, a contradiction to Corollary \ref{prop3}(1).
\end{proof}

\section{Perturbation of the least eigenvalues}

We will give a perturbation result on the least eigenvalues under relocating a branch.
%We need introduce an operation called \emph{relocating}.
Let $G_0$, $H$ be two vertex-disjoint hypergraphs, where $v_1$, $v_2$ are two distinct vertices of $G_0$, and $u$ is a vertex of $H$ (called the \emph{root} of $H$).
Let $G = G_0(v_2)\diamond H(u)$ and $\tilde{G} =G_0(v_1)\diamond H(u)$.
We say that $\tilde{G}$ is obtained from $G$ by relocating $H$ rooted at $u$ from $v_2$ to $v_1$; see Fig. \ref{relo}.

\begin{figure}[htbp]
\includegraphics[scale=.8]{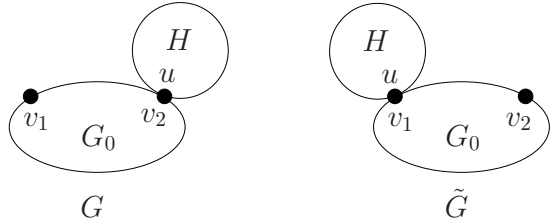}
\caption{\small Relocating $H$ from $v_2$ to $v_1$}\label{relo}
\end{figure}

\begin{lemma}\label{movedge}
Let $G=G_0(v_2)\diamond H(u)$ and $\tilde{G}=G_0(v_1)\diamond H(u)$ be connected $k$-uniform hypergraphs.
If $x$ is a first eigenvector of $G$ such that $|x_{v_1}|\ge |x_{v_2}|$,  then
\begin{equation}\label{le}
\lambda_{\min}(\tilde{G})\le \lambda_{\min}(G),
\end{equation}
with equality if and only if $x_{v_1}= x_{v_2}=0$, and $\tilde{x}$ defined in (\ref{case2}) is a first eigenvector of $\tilde{G}$.
\end{lemma}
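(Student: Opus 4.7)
The plan is to apply the variational characterization in Lemma \ref{semi}(2) to $\tilde{G}$, using as test vector a suitably ``signed'' copy of $x$ obtained by flipping signs on $V(H)\setminus\{u\}$. Identifying the vertex sets of $G$ and $\tilde{G}$ in the natural way, define, for a sign $\epsilon\in\{-1,+1\}$ to be specified,
\begin{equation}\label{case2}
\tilde{x}_w =
\begin{cases}
x_w, & w \in V(G_0),\\
\epsilon\, x_w, & w \in V(H)\setminus\{u\}.
\end{cases}
\end{equation}
Since $k$ is even, $\|\tilde{x}\|_k = \|x\|_k = 1$. Using $\epsilon^k=1$ and $\epsilon^{k-1}=\epsilon$ together with Eq.~(\ref{form}), the edges of $G_0$ and the edges of $H$ not incident to $u$ contribute identically to $\A(\tilde{G})\tilde{x}^k$ and $\A(G)x^k$, while the edges in $E_H(u)$ have only their $u$-factor replaced, yielding
\[
\A(\tilde{G})\tilde{x}^k - \A(G) x^k \;=\; k\,(\epsilon x_{v_1} - x_{v_2})\, S,
\qquad S := \sum_{e\in E_H(u)} x^{e\setminus\{u\}}.
\]

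By Lemma \ref{bh}, $x_{v_2}S = \alpha_H(x)\le 0$. Combining this with $|x_{v_1}|\ge|x_{v_2}|$ and a short case check on the signs of $x_{v_2}$ and $S$, one can choose $\epsilon\in\{\pm 1\}$ so that $(\epsilon x_{v_1}-x_{v_2})S\le 0$: when $x_{v_2}\ne 0$ pick $\epsilon$ making $\epsilon x_{v_1}$ of the same sign as $x_{v_2}$, so that $|\epsilon x_{v_1}|\ge|x_{v_2}|$ gives the desired sign; when $x_{v_2}=0$ pick $\epsilon$ so that $\epsilon x_{v_1}S\le 0$. Lemma \ref{semi}(2) then gives $\lambda_{\min}(\tilde{G})\le\A(\tilde{G})\tilde{x}^k\le\A(G)x^k=\lambda_{\min}(G)$, which is (\ref{le}).

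For the equality case, suppose $\lambda_{\min}(\tilde{G})=\lambda_{\min}(G)$; then $(\epsilon x_{v_1}-x_{v_2})S=0$ and $\tilde{x}$ is a first eigenvector of $\tilde{G}$. Comparing the eigenvector equation of $\tilde{x}$ at $v_2$ in $\tilde{G}$ (where $v_2\in V(G_0)\setminus\{v_1\}$, so only $G_0$-edges contribute) with the eigenvector equation of $x$ at $u=v_2$ in $G$ forces $S=0$, hence $\alpha_H(x)=0$. If $x|_{G_0}=0$ then $x_{v_1}=x_{v_2}=0$ already; otherwise Lemma \ref{bh} yields $x_{v_2}=0$ and $\lambda_{\min}(G_0)=\lambda_{\min}(G)$. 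With $S=0$, the eigenvector equations of $\tilde{x}$ at every vertex of $G_0$ reduce to $\A(G_0)(x|_{G_0})^{k-1}=\lambda_{\min}(\tilde{G})(x|_{G_0})^{[k-1]}$, making $x|_{G_0}$ a first eigenvector of $G_0$ with $(x|_{G_0})_{v_1}=x_{v_1}$. Corollary \ref{prop3}(1) applied to the decomposition $\tilde{G}=G_0(v_1)\diamond H(u)$ then forces $x_{v_1}=0$, since otherwise $\lambda_{\min}(G_0)>\lambda_{\min}(\tilde{G})$ would contradict $\lambda_{\min}(G_0)=\lambda_{\min}(\tilde{G})$. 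Conversely, if $x_{v_1}=x_{v_2}=0$ and $\tilde{x}$ is a first eigenvector of $\tilde{G}$, the difference formula gives $\lambda_{\min}(\tilde{G})=\A(\tilde{G})\tilde{x}^k=\A(G)x^k=\lambda_{\min}(G)$ directly.

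The main obstacle is the ``only if'' direction of the equality case: extracting $x_{v_2}=0$ uses Lemma \ref{bh} on the original coalescence, but pinning down $x_{v_1}=0$ requires first promoting $x|_{G_0}$ to a first eigenvector of $G_0$ (which needs $S=0$ crucially) and then invoking Corollary \ref{prop3}(1) with respect to the \emph{new} branch decomposition $\tilde{G}=G_0(v_1)\diamond H(u)$, so the bookkeeping of eigenvector equations at $v_1$ and $v_2$ in both hypergraphs is the subtle part.
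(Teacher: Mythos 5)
Your proof is correct, and it takes a genuinely different (and cleaner) route than the paper's. The paper splits into three cases according to the sign of $x_{v_2}$ and, when $x_{v_2}\ne 0$, rescales $x$ on $V(H)\setminus\{u\}$ by $\delta=|x_{v_1}|/|x_{v_2}|\ge 1$; this changes the $k$-norm of the test vector and yields
$\lambda_{\min}(\tilde{G})\le \lambda+(\delta^k-1)\alpha_H(x)/\|\tilde{x}\|_k^k$, with strictness coming from $\alpha_H(x)<0$ (Lemma \ref{bh}), while the case $x_{v_2}=0$ is handled separately via Corollary \ref{prop3}. You instead keep the test vector at unit norm by flipping only a sign $\epsilon$ on $V(H)\setminus\{u\}$, and the whole comparison collapses to the single identity
$\A(\tilde{G})\tilde{x}^k-\A(G)x^k=k(\epsilon x_{v_1}-x_{v_2})S$ with $S=\sum_{e\in E_H(u)}x^{e\setminus\{u\}}$; the hypothesis $|x_{v_1}|\ge|x_{v_2}|$ together with $x_{v_2}S=\alpha_H(x)\le 0$ then lets you choose $\epsilon$ so the difference is nonpositive, with no division by $x_{v_2}$ and hence no case split. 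Your equality analysis (comparing the eigenvector equations of $x$ and $\tilde{x}$ at $v_2$ in the two hypergraphs to force $S=0$, then invoking Lemma \ref{bh} and Corollary \ref{prop3}(1) for the decomposition $\tilde{G}=G_0(v_1)\diamond H(u)$) is essentially the mechanism the paper uses inside its Cases 1 and 2, so both arguments ultimately rest on the same two pillars, Lemma \ref{bh} and Corollary \ref{prop3}.

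One small point to tidy: the equality condition in the statement refers to the un-flipped transplant ($\epsilon=+1$), whereas your optimizing test vector may have $\epsilon=-1$, and a sign flip on $V(H)\setminus\{u\}$ does not in general preserve eigenvectors of $\A(\tilde{G})$. This is harmless here: once $x_{v_1}=x_{v_2}=0$, your difference formula gives the same value of $\A(\tilde{G})\tilde{x}^k=\A(G)x^k$ for both choices of $\epsilon$, and both vectors have unit $k$-norm, so by Lemma \ref{semi}(2) one is a first eigenvector of $\tilde{G}$ exactly when the other is. A sentence to this effect would make the ``only if'' direction airtight.
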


\begin{proof}
Let $x$ be a first eigenvector of $G$ such that $\|x\|_k=1$ and $x_{v_1}\ge 0$.
We divide the discussion into three cases.
Denote $\la:=\lambda_{\min}(G)$.

\textbf{Case 1:} $x_{v_2}>0$.
Write $x_{v_1}=\delta x_{v_2}$, where $\delta \ge  1$.
Define $\tilde{x}$ on $\tilde{G}$ by
\begin{equation}\label{evtld1}
\tilde{x}_v=
\begin{cases}
x_v,&\text{if } v\in V(G_0);\\
\delta x_v,&\text{if } v\in V(H)\backslash\{u\}.
\end{cases}
\end{equation}
Then $\|\tilde{x}\|_k^k=1+(\delta^k-1)\sum_{v\in {V(H)\backslash\{u\}}}x_v^k$, and
\begin{eqnarray*}
  \A(\tilde{G})\tilde{x}^k &=& \sum_{e\in E(\tilde{G})}k \tilde{x}^e \\
            &=& \A(G)x^k +(\delta^k-1)\sum_{e\in E(H)}kx^e\\
            &=& \la+(\delta^k-1)\sum_{e\in E(H)}kx^e.
\end{eqnarray*}

By the eigenvector equation of $x$ at each vertex $v\in V(H)\backslash \{u\}$,
\begin{equation}\label{ehv}
\sum_{e\in E_H(v)}x^e=\la x_v^k.
\end{equation}
By the eigenvector equation of $x$ at $u$,
\begin{equation}\label{egu}
\sum_{e\in E_G(u)}x^e=\alpha_H(x)+\gamma_{G_0}(x)=\la x_u^k,
\end{equation}
where $\gamma_{G_0}(x):=\sum_{e\in E_{G_0}(u)}x^e$.
By Eq. (\ref{ehv}) and Eq. (\ref{egu}), we have
$$\gamma_{G_0}(x)+\sum_{e\in E(H)}kx^e=\la \sum_{v\in V(H)}x_v^k.$$
So
\begin{eqnarray*}
  \sum_{e\in E(H)}kx^e &=& \la \sum_{v\in V(H)}x_v^k-\gamma_{G_0}(x) \\
  &=& \la \sum _{v\in V(H)}x_v^k-(\la x_u^k-\alpha_H(x))\\
  &=& \la \sum_{v\in {V(H)\backslash\{u\}}}x_v^k+\alpha_H(x).
\end{eqnarray*}
Thus
\begin{eqnarray*}
   \A(\tilde{G})\tilde{x}^k &=&  \la+(\delta^k-1)\left(\la \sum_{v\in {V(H)\backslash\{u\}}}x_v^k+\alpha_H(x)\right)\\
            &=& \la \left(1+(\delta^k-1)\sum_{v\in {V(H)\backslash\{u\}}}x_v^k\right)+(\delta^k-1)\alpha_H(x)\\
            &=& \la \|\tilde{x}\|_k^k+(\delta^k-1)\alpha_H(x).
\end{eqnarray*}
As $x_{v_2}\ne 0$, $\alpha_H(x) < 0$ by Lemma \ref{bh},
\[
\lambda_{\min}(\tilde{G})\le  \frac{\A(\tilde{G})\tilde{x}^k} {\|\tilde{x}\|_k^k}
=\lambda+\frac{(\delta^k-1)\alpha_H(x)} {\|\tilde{x}\|_k^k}\le \la=\lambda_{\min}(G),
\]
where the first equality holds if and only if $\tilde{x}$ is a first eigenvector of $\tilde{G}$,
and the second equality holds if and only if $\delta=1$, i.e. $x_{v_1}=x_{v_2}$.
%However, by Lemma \ref{bh}, $\alpha_H(x)<0$ as $x_{v_2}\ne 0$.
So, the equality holds if and only if $x_{v_1}=x_{v_2}$ and $\tilde{x}$ is a first eigenvector of $\tilde{G}$.
By the eigenvector equations of $x$ and $\tilde{x}$ at $v_2$ respectively, we will get $\alpha_H(x)=0$, a contradiction.
So, in this case, $\lambda_{\min}(\tilde{G})< \lambda_{\min}(G)$.

\textbf{Case 2:} $x_{v_2}=0$.
First assume $x_{v_1}=0$.
Define $\tilde{x}$ on $\tilde{G}$ by
\begin{equation}\label{case2}
\tilde{x}_v=
\begin{cases}
x_v,&\text{if } v\in V(G_0);\\
x_v,&\text{if } v\in V(H)\backslash\{u\}.
\end{cases}
\end{equation}
Then $\|\tilde{x}\|_k^k=1$,
and
$$\la_{\min}(\tilde{G}) \le \A(\tilde{G})\tilde{x}^k=\A(G)x^k =\la_{\min}(G),$$
with equality if and only if $\tilde{x}$ is a first eigenvector of $\tilde{G}$.
%In this case we surely have $\alpha_H(x)=0$ as $x_u=0$.

Now assume that $x_{v_1}>0$.
By Corollary \ref{prop3}(2) and its proof, $\la_{\min}(G)=\la_{\min}(G_0)$ as $x_{u}=0$ and $x|_{G_0} \ne 0$;
furthermore, $x|_{G_0}$ is a first eigenvector of $G_0$.
By  Corollary \ref{prop3}(1), $\la_{\min}(G_0)>\la_{\min}(\tilde{G})$ as $(x|_{G_0})_{v_1}\ne 0$,
thinking of $v_1$ a coalescence vertex between $G_0$ and $H$ in $\tilde{G}$.
So $\la_{\min}(\tilde{G}) < \la_{\min}(G)$.

\textbf{Case 3:} $x_{v_2}<0$.
Write $x_{v_1}=-\delta x_{v_2}$, where $\delta \ge  1$.
Define $\tilde{x}$ on $\tilde{G}$ by
\begin{equation}\label{evtld2}
\tilde{x}_v=
\begin{cases}
x_v,&\text{if } v\in V(G_0);\\
-\delta x_v,&\text{if } v\in V(H)\backslash\{u\}.
\end{cases}
\end{equation}
By a similar discussion to Case 1 by replacing $\delta$ by $-\delta$,  we also have
$\lambda_{\min}(\tilde{G})< \lambda_{\min}(G)$.
%where the  equality holds if and only if $\tilde{x}$ is a first eigenvector of $\tilde{G}$,
%and $x_{v_1}=-x_{v_2}$ or $\alpha_H(x)=0$.
\end{proof}

Denoted by $\mathcal{T}_m(G_0)$ the class of hypergraphs with each obtained from a fixed connected hypergraph $G_0$ by attaching
some hypertrees at some vertices of $G_0$ respectively
(i.e. identifying a vertex of a hypertree with some vertex of $G_0$ each time) such that the number of its edges equals $\varepsilon(G_0)+m$.
A hypergraph is called a \emph{minimizing hypergraph} in a certain class of hypergraphs if its least eigenvalue attains the minimum
among all hypergraphs in the class.

We will characterize the minimizing hypergraph(s) in $\mathcal{T}_m(G_0)$.
Denote by $G=G_0(u)\diamond S_m^k(u)$ the coalescence of $G_0$ and $S_m^k$ by identifying one vertex of $G_0$ and the central vertex of $S_m^k$ and forming a new vertex $u$.

\begin{theorem}\label{min}
Let $G_0$ be a connected $k$-uniform hypergraph.
If $G$ is a minimizing hypergraph in $\mathcal{T}_m(G_0)$, then $G=G_0(u) \diamond S_m^k(u)$ for a unique vertex $u$ of $G_0$.
\end{theorem}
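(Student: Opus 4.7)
The plan is a two-stage reduction using the branch-relocation perturbation (Lemma \ref{movedge}): first consolidate all attached hypertrees at a single vertex of $G_0$, then collapse the resulting hypertree into a hyperstar. Throughout, fix a minimizing $G \in \mathcal{T}_m(G_0)$, write $G = G_0 \cup T_1 \cup \cdots \cup T_s$ with each $T_i$ a power hypertree rooted at a distinct $u_i \in V(G_0)$, and fix a first eigenvector $x$ of $G$.

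\textbf{Stage 1 (single attachment vertex).} Choose $u \in \{u_1,\ldots,u_s\}$ maximizing $|x_u|$. For any $u_j \ne u$, Lemma \ref{movedge} applied with $v_1 := u$, $v_2 := u_j$, $H := T_j$ produces $\tilde{G} \in \mathcal{T}_m(G_0)$ with $\lambda_{\min}(\tilde{G}) \le \lambda_{\min}(G)$. Minimality of $G$ forces equality, and the equality clause of Lemma \ref{movedge} yields $x_u = x_{u_j} = 0$; maximality of $|x_u|$ then forces $x_{u_i} = 0$ for \emph{every} $i$. In this degenerate case I would iterate Lemma \ref{prop1}(2) along each odd-bipartite $T_i$ starting from the zero root $u_i$ (using $\lambda_{\min}(G) \le -1 < 0$ from Lemma \ref{mindegree} so that the eigenvector equation at each pendent vertex propagates vanishing) to conclude that $x \equiv 0$ on each $T_i$. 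Then $x$ is supported on $G_0$, and the eigenvector equation forces $x|_{G_0}$ to be a first eigenvector of $G_0$ with eigenvalue $\lambda_{\min}(G)$, so $\lambda_{\min}(G_0) = \lambda_{\min}(G)$. Picking $u^{\ast} \in V(G_0)$ with $(x|_{G_0})_{u^{\ast}} \ne 0$ and applying Corollary \ref{prop3}(1) to $G_0(u^{\ast}) \diamond S_m^k(u^{\ast}) \in \mathcal{T}_m(G_0)$ yields $\lambda_{\min}(G_0(u^{\ast}) \diamond S_m^k(u^{\ast})) < \lambda_{\min}(G_0) = \lambda_{\min}(G)$, contradicting the minimality of $G$. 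Hence $s = 1$ and $G = G_0(u) \diamond T(u)$ for one hypertree $T$ rooted at a unique $u \in V(G_0)$.

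\textbf{Stage 2 (hyperstar shape).} Assume $T$ is not the hyperstar $S_m^k$ centered at $u$. Then $T$ contains a pendent edge $e^{\ast}$ whose unique non-pendent vertex $v$ is distinct from $u$. Let $B$ be the sub-hypertree of $T$ hanging off $v$ away from $u$ (so $B$ contains $e^{\ast}$). A further application of Lemma \ref{movedge} relocates $B$ from $v$ to $u$ when $|x_u| \ge |x_v|$, while if $|x_u| < |x_v|$ I would re-root the picture and relocate the complementary branch in the reverse direction; in either case the result lies in $\mathcal{T}_m(G_0)$ with no larger least eigenvalue. The same minimality-plus-equality-plus-vanishing-propagation chain as in Stage 1, culminating in Corollary \ref{prop3}(1), produces a contradiction. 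Consequently every pendent edge of $T$ has non-pendent vertex $u$, which (since the underlying simple tree is connected) forces that tree to be a star centered at $u$, i.e.\ $T = S_m^k$. Uniqueness of $u$ is then immediate from the structural form.

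\textbf{Main obstacle.} The principal difficulty throughout is the equality case in Lemma \ref{movedge}, which forces \emph{both} endpoints of the relocation to be zero-entries of $x$ and therefore leaves room for a pathological first eigenvector concentrated away from the natural attachment vertices. Closing this gap relies on two ingredients working in tandem: the vanishing-propagation enabled by odd-bipartiteness of power hypertrees and Lemma \ref{prop1}(2), which turns a single zero at a sub-hypertree's root into total vanishing on that sub-hypertree; and the strict inequality of Corollary \ref{prop3}(1), which rules out a configuration in which $x$ vanishes at every attachment vertex while $G$ remains a minimizer. Orchestrating these two tools correctly — especially handling the $|x_u| < |x_v|$ case in Stage 2 — is the most delicate part of the argument.
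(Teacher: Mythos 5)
Your overall strategy --- consolidate the attached hypertrees at one vertex via Lemma \ref{movedge} and then collapse the result to a hyperstar --- is the same as the paper's, but the way you close the equality case of Lemma \ref{movedge} has a genuine gap. When every attachment vertex satisfies $x_{u_i}=0$, you claim that iterating Lemma \ref{prop1}(2) forces $x\equiv 0$ on each attached tree $T_i$. But Lemma \ref{prop1}(2) only gives $x^{e\setminus\{u_i\}}=0$ for the edges $e$ at the root, i.e.\ that the \emph{product} of the entries over $e\setminus\{u_i\}$ vanishes, hence only that at least one vertex of each such edge has zero entry. The eigenvector equation does kill the degree-one vertices of those root edges (their equation contains the factor $x_{u_i}=0$), but at a vertex $a\in e$ of degree at least two it reads $\lambda x_a^{k-1}=\sum_{e'\in E(a),\,e'\ne e}x^{e'\setminus\{a\}}$, which says only that $x$ restricted to the subtree hanging from $a$ is an eigenvector of that subtree for the eigenvalue $\lambda_{\min}(G)$; nothing in your toolkit forces it to vanish. (It does vanish, but proving this needs a Perron--Frobenius comparison such as $-\lambda_{\min}(G)\ge\rho(T_i)>\rho(T')$ for every proper subtree $T'$, or the positivity of the Perron vector of a connected hypertree --- neither of which you invoke.) The place where this matters is the sub-case $x|_{G_0}=0$: if you cannot rule it out, you cannot pick the vertex $u^{\ast}$ with $(x|_{G_0})_{u^{\ast}}\ne 0$ that your contradiction via Corollary \ref{prop3}(1) requires. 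The same unresolved degeneracy recurs in your Stage 2.

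The paper sidesteps this degenerate case entirely: after the (weak) consolidation steps it proves $|x_{u_0}|=\max_{v\in V(G^{(2)})}|x_v|$, using that any violation would let Lemma \ref{movedge} produce a \emph{strict} decrease (equality there forces both relocation endpoints to be zero, impossible when one of them has strictly larger modulus than the other). Since $x\ne 0$ this gives $x_{u_0}\ne 0$, so the final relocation of a pendent edge toward $u_0$ is automatically strict. You should either adopt that device or supply the missing Perron--Frobenius input. A smaller point: you declare at the outset that each $T_i$ is a power hypertree, which members of $\mathcal{T}_m(G_0)$ need not satisfy; the paper spends its Case 2 reducing to power hypertrees, and while your pendent-edge relocation in Stage 2 can be adapted to general hypertrees, the final step ``every pendent edge attached at $u$ implies $T=S_m^k$'' then needs the acyclicity argument spelled out.
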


\begin{proof}
Suppose that $G$ is a minimizing hypergraph in $\mathcal{T}_m(G_0)$, and $G$ has no the structure as desired in the theorem.
We will get a contradiction by the following three cases.

\textbf{Case 1:} $G$ contains hypertrees attached at two or more vertices of $G_0$.
Let $T_1$, $T_2$ be two hypertrees attached at $v_1,v_2$ of $G_0$ respectively.
Let $x$ be a first eigenvector of $G$.
Assume $|x_{v_1}| \ge |x_{v_2}|$.
Relocating $T_2$ rooted at $v_2$ and attaching to $v_1$, we will get a hypergraph $\bar{G} \in \mathcal{T}_m(G_0)$ such
that $\la_{\min}(\bar{G}) \le \la_{\min}(G)$ by Lemma \ref{movedge}.
Repeating the above operation, we finally arrive at a hypergraph $G^{(1)}$ with only one hypertree $T^{(1)}$ attached at one vertex $u_0$ of $G_0$ such that
$\la_{\min}(G^{(1)}) \le \la_{\min}(G)$.

{\bf Case 2:} $T^{(1)}$ contains edges with three or more vertices of degree greater than one, i.e. $T^{(1)}$ is not a power hypertree.
Let $e$ be one of such edges containing $u,v,w$ with $d(u),d(v), d(w)$ all greater than one.
Let $x$ be a first eigenvector of $G^{(1)}$, and assume that $|x_u| \ge |x_w|$.
Relocating the hypertree rooted at $w$ and attaching to $u$, we will get a hypergraph $\hat{G} \in \mathcal{T}_m(G_0)$ such
that $\la_{\min}(\hat{G}) \le \la_{\min}(G^{(1)})$ by Lemma \ref{movedge}.
Repeating the above operation on the edge $e$ until $e$ contains exactly $2$ vertices of degree greater than one, and on each other edges like $e$,
we finally arrive at a hypergraph $G^{(2)}$ such that the unique hypertree $T^{(2)}$ attached at $u_0$ is a power hypertree, and
$\la_{\min}(G^{(2)}) \le \la_{\min}(G^{(1)})$.

{\bf Case 3:} $T^{(2)}$ contains pendent edges not attached at $u_0$.
%Let $v$ be a vertex of $T^{(2)}$ such that $d(v)>2$ and $d(u_0,v)$ is largest among all vertices of $T^{(2)}$,
%where $d(u_0,v)$ denotes the distance of from $u_0$ to $v$.
%Then there are at least two hanging hyperpath attached at $v$.
Let $x$ be a first eigenvector of $G^{(2)}$.
We assert that $|x_{u_0}|=\max_{v \in V(G_0)}|x_v|$.
Otherwise, there exists a vertex $v_0$ of $G_0$ such that $|x_{v_0}|>|x_{u_0}|$.
Relocating $T^{(2)}$ rooted at $u_0$ and attaching to $v_0$, we will get a hypergraph $\tilde{G} \in \mathcal{T}_m(G_0)$ such
that $\la_{\min}(\tilde{G}) < \la_{\min}(G^{(2)})$ by Lemma \ref{movedge}.
Then $\la_{\min}(\tilde{G}) < \la_{\min}(G)$, a contradiction to $G$ being minimizing.

We also assert that $|x_{u_0}|=\max_{v \in V(G^{(2)})}|x_v|$.
Otherwise, let $w_0 \in V(T^{(2)}) \backslash \{u_0\}$ such that $|x_{w_0}|=\max_{v \in V(G^{(2)})}|x_v| > |x_{u_0}|$.
Then $w_0$ must have degree greater than one;
otherwise letting $e$ be the only edge containing $w_0$, and letting $\bar{w}_0$ a vertex in $e$ with degree greater than one,
relocating the branch attached at $\bar{w}_0$ and attaching to $w_0$, we will get a hypergraph with smaller least eigenvalue by Lemma \ref{movedge}, a contradiction.
Now relocating $G_0$ rooted at $u_0$ and attaching to $w_0$, we also  get a hypergraph with smaller least eigenvalue by Lemma \ref{movedge}.
By a similar discussion, $u_0$ is the unique vertex satisfying $|x_{u_0}|=\max_{v \in V(G^{(2)})}|x_v|$.

Finally let $e$ be a pendent edge of $T^{(2)}$ attached at $v \ne u_0$.
Relocating $e$ from $v$ to $u_0$, we get a hypergraph with smaller least eigenvalue by Lemma \ref{movedge}, a contradiction.
The result now follows.
\end{proof}

If taking $G_0$ to be a single edge in Theorem \ref{min}, then we get the following corollary immediately.
%the minimizing hypertree with $m$ edges is exactly the

\begin{coro}\label{hyptree-star}
Among all $k$-uniform hypertree with $m$ edges, the minimizing hypergraph is unique, namely the hyperstar $S_m^k$.
\end{coro}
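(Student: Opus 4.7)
The plan is to derive Corollary \ref{hyptree-star} as an immediate specialization of Theorem \ref{min}, choosing $G_0$ to be a single $k$-uniform edge (that is, the hyperstar $S_1^k$) and replacing $m$ by $m-1$ in the statement of the theorem.

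First I would verify that, for this choice of $G_0$, the class $\mathcal{T}_{m-1}(G_0)$ coincides up to isomorphism with the class of all $k$-uniform hypertrees on $m$ edges. The forward inclusion is immediate from Lemma \ref{tree}: attaching hypertrees to a single edge preserves connectedness and acyclicity, and the total edge count is $\varepsilon(G_0) + (m-1) = m$. For the reverse inclusion, given any $k$-uniform hypertree $T$ with $m$ edges, I would pick any edge $e_0$ of $T$ to play the role of the embedded copy of $G_0$. Deleting $e_0$ while retaining its vertices splits $T$ into connected components, each of which is a sub-hypertree attached to $e_0$ at exactly one vertex of $e_0$; this uses that $T$ is acyclic, so no component can meet $e_0$ in more than one vertex. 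Recovering $T$ by attaching these sub-hypertrees to the vertices of $e_0$ exhibits $T$ as a member of $\mathcal{T}_{m-1}(e_0)$.

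Applying Theorem \ref{min}, any minimizing hypergraph in $\mathcal{T}_{m-1}(G_0)$ has the form $G_0(u) \diamond S_{m-1}^k(u)$ for some vertex $u$ of $G_0$. Since $G_0$ is a single edge, identifying any of its vertices with the central vertex of $S_{m-1}^k$ produces a hypergraph in which all $m$ edges (the edge $G_0$ itself, together with the $m-1$ edges of $S_{m-1}^k$) share the common vertex $u$; this is precisely $S_m^k$, and the isomorphism type is independent of the chosen $u \in V(G_0)$ by the symmetry of a single edge. Consequently, the minimizing hypertree on $m$ edges is unique up to isomorphism and equals $S_m^k$.

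I do not anticipate any real obstacle here, since the corollary is essentially a reformulation of Theorem \ref{min} in the trivial case $G_0 = S_1^k$; the only mildly delicate point is the identification of $\mathcal{T}_{m-1}(S_1^k)$ with the class of $m$-edge hypertrees, which rests on the structural fact noted above that any hypertree decomposes as a chosen edge with sub-hypertrees attached at its vertices.
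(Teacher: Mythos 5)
Your proposal is correct and follows exactly the paper's route: the paper derives the corollary in one line by taking $G_0$ to be a single edge in Theorem \ref{min}. You merely fill in the details the paper leaves implicit, namely the identification of $\mathcal{T}_{m-1}(S_1^k)$ with the class of $m$-edge hypertrees (correctly fixing the edge-count offset, since $\mathcal{T}_m(G_0)$ has $\varepsilon(G_0)+m$ edges) and the observation that $S_1^k(u)\diamond S_{m-1}^k(u)\cong S_m^k$ independently of the chosen vertex $u$.
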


It is easy to verify that a hypertree $T$ is odd-bipartite (by induction on the number of edges).
So the spectrum of $\A(T)$ is symmetric with respect to the origin \cite{SSW}, and hence $\la_{\min}(T)=-\rho(T)$.
Therefore, Corollary \ref{hyptree-star} implies a result of \cite{LSQ} when $k$ is even, which says that $S_m^k$ is the unique hypergraph with maximum spectral radius among all $k$-uniform hypertrees with $m$ edges.

Next we consider the case that $G_0$ is non-odd-bipartite.
Let $C_{2n+1}$ be an odd cycle of length $2n+1$ (as a simple graph).
Let $k \ge 4$ be a positive even integer.
Then $C_{2n+1}^{k, {k \over 2}}$ is a non-odd-bipartite $k$-uniform hypergraph \cite{KF}.
Let $K_n^k$ be a complete $k$-uniform hypergraph on $n \ge k+1$ vertices.
It is easy to verify that $K_n^k$ is non-odd-bipartite.
If taking $G_0$ be $C_{2n+1}^{k, {k \over 2}}$ or $K_n^k$ in Theorem \ref{min}, we get the following results immediately.

\begin{coro}
The minimizing hypergraph in $\mathcal{T}_m(C_{2n+1}^{k, {k \over 2}})$ is the unique hypergraph $C_{2n+1}^{k, {k \over 2}}(u) \diamond S_m^k(u)$ for
an arbitrary vertex $u$ of $C_{2n+1}^{k, {k \over 2}}$, up to isomorphism.
\end{coro}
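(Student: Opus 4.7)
The plan is to deduce this corollary as an immediate application of Theorem \ref{min} together with the vertex-transitivity of $C_{2n+1}^{k, {k \over 2}}$. Setting $G_0 = C_{2n+1}^{k, {k \over 2}}$, which is a connected $k$-uniform hypergraph, Theorem \ref{min} applies directly and forces every minimizing hypergraph in $\mathcal{T}_m(C_{2n+1}^{k, {k \over 2}})$ to be of the form $C_{2n+1}^{k, {k \over 2}}(u) \diamond S_m^k(u)$ for some vertex $u$ of $C_{2n+1}^{k, {k \over 2}}$. Existence of a minimizer is automatic because $\mathcal{T}_m(G_0)$ is a finite class of hypergraphs.

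What remains is to show that this description is independent of the choice of $u$ up to isomorphism, for which I would verify that $C_{2n+1}^{k, {k \over 2}}$ is vertex-transitive. First, every rotation of the underlying odd cycle $C_{2n+1}$ lifts to an automorphism of $C_{2n+1}^{k, {k \over 2}}$ by mapping each blow-up ${k \over 2}$-set bijectively onto the blow-up ${k \over 2}$-set of its image vertex, which already realizes transitivity on the collection of blow-up sets. Second, within any single blow-up set the ${k \over 2}$ vertices share exactly the same pair of incident hyperedges (the two hyperedges coming from the two cycle edges at the underlying vertex), so any permutation of a blow-up set, extended by the identity elsewhere, is an automorphism of $C_{2n+1}^{k, {k \over 2}}$. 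Composing these two kinds of automorphisms yields vertex-transitivity.

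Finally, given any two vertices $u, u'$ of $C_{2n+1}^{k, {k \over 2}}$ and an automorphism $\phi$ with $\phi(u) = u'$, I would extend $\phi$ by the identity on the hyperstar side to construct an isomorphism $C_{2n+1}^{k, {k \over 2}}(u) \diamond S_m^k(u) \cong C_{2n+1}^{k, {k \over 2}}(u') \diamond S_m^k(u')$, which will complete the proof. The only part requiring a little care is organising the lift of a cycle rotation so that the chosen bijections between blow-up sets are mutually compatible across each hyperedge; once that bookkeeping is done, the result is essentially a packaging of Theorem \ref{min} with a symmetry observation, and there is no serious analytic obstacle.
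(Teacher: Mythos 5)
Your proposal is correct and takes essentially the same approach as the paper, which derives the corollary by taking $G_0 = C_{2n+1}^{k,{k\over 2}}$ in Theorem \ref{min} and leaves the vertex-transitivity of $C_{2n+1}^{k,{k\over 2}}$ (and the finiteness argument for existence of a minimizer) implicit. The only remark is that the ``bookkeeping'' you flag at the end is vacuous: each hyperedge of $C_{2n+1}^{k,{k\over 2}}$ is simply the union of two blow-up sets, so any choice of bijections between blow-up sets covering a rotation of the underlying cycle automatically sends hyperedges to hyperedges, with no compatibility condition to check.
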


\begin{coro}
The minimizing hypergraph in $\mathcal{T}_m(K_n^k)$ is the unique hypergraph $K_n^k(u) \diamond S_m^k(u)$ for
an arbitrary vertex $u$ of $K_n^k$, up to isomorphism.
\end{coro}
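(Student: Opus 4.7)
The plan is to assume for contradiction that a minimizing $G\in\mathcal{T}_m(G_0)$ is not of the form $G_0(u)\diamond S_m^k(u)$ and to produce a hypergraph in $\mathcal{T}_m(G_0)$ with strictly smaller least eigenvalue by three structural reductions, each driven by the branch-relocation Lemma~\ref{movedge}. Throughout, $x$ denotes a first eigenvector of the hypergraph at hand, and the guiding principle is that whenever $|x_{v_1}|\ge|x_{v_2}|$, relocating any branch rooted at $v_2$ onto $v_1$ does not increase $\la_{\min}$, with strict decrease unless $x_{v_1}=x_{v_2}=0$.

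The first reduction consolidates the attached material onto one vertex: if hypertrees are attached at distinct vertices $v_1,v_2\in V(G_0)$, swap labels so that $|x_{v_1}|\ge|x_{v_2}|$ and relocate the $v_2$-rooted hypertree onto $v_1$, producing $\bar G\in\mathcal{T}_m(G_0)$ with $\la_{\min}(\bar G)\le\la_{\min}(G)$. Iterating produces a hypergraph $G^{(1)}$ carrying a single attached hypertree $T^{(1)}$ rooted at one vertex $u_0\in V(G_0)$, with $\la_{\min}(G^{(1)})\le\la_{\min}(G)$. The second reduction forces $T^{(1)}$ to be a power hypertree: so long as some edge of $T^{(1)}$ contains three vertices of degree greater than one, pick two such, say $u$ and $w$ with $|x_u|\ge|x_w|$, and relocate the sub-branch rooted at $w$ onto $u$; Lemma~\ref{movedge} again ensures the least eigenvalue does not increase. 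The outcome is $G^{(2)}\in\mathcal{T}_m(G_0)$ whose attached tree $T^{(2)}$ is a power hypertree rooted at $u_0$, with $\la_{\min}(G^{(2)})\le\la_{\min}(G^{(1)})$.

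The third reduction identifies $u_0$ as the unique $|\cdot|$-maximizer of a first eigenvector $x$ of $G^{(2)}$ and then collapses $T^{(2)}$ to $S_m^k$. If some $v_0\in V(G_0)\setminus\{u_0\}$ satisfied $|x_{v_0}|>|x_{u_0}|$, relocating the whole tree $T^{(2)}$ from $u_0$ onto $v_0$ would yield a strict drop in $\la_{\min}$ by Lemma~\ref{movedge}, contradicting minimality of $G$. If instead the maximum were attained at some $w_0\in V(T^{(2)})\setminus\{u_0\}$, first reduce to the case $d(w_0)>1$ (otherwise an incident-branch relocation onto $w_0$ already contradicts minimality), then relocate the base $G_0$ from $u_0$ onto $w_0$ for a strict drop. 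The same reasoning gives uniqueness of the maximizer. Finally, if $T^{(2)}$ possesses a pendent edge $e$ attached at a vertex $v\ne u_0$, then $|x_{u_0}|>|x_v|$, and relocating $e$ from $v$ onto $u_0$ gives $\la_{\min}<\la_{\min}(G^{(2)})\le\la_{\min}(G)$, the desired contradiction; hence $T^{(2)}=S_m^k$ centered at $u_0$, and the unique coalescence vertex is $u_0$.

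The main obstacle is precisely this third step: the first two reductions only provide weak inequalities, so the contradiction must come from one of the strict-inequality clauses of Lemma~\ref{movedge}, which rules out only the doubly-vanishing case $x_{v_1}=x_{v_2}=0$. Securing a strict inequality therefore forces the careful choreography above, in which three qualitatively different relocations (the whole tree $T^{(2)}$, the whole base $G_0$, and a single pendent edge) are used in tandem to pin $u_0$ down as the unique $|\cdot|$-maximum of a first eigenvector before the hyperstar structure can be concluded.
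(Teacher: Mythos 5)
Your argument is correct, but it is essentially a verbatim reconstruction of the paper's proof of Theorem \ref{min} (the three-stage reduction via Lemma \ref{movedge}: consolidate all hypertrees at one vertex, reduce to a power hypertree, then use the unique $|\cdot|$-maximality of $x_{u_0}$ to force the hyperstar); the paper proves this corollary simply by invoking that theorem with $G_0=K_n^k$. The only point you leave implicit is the one ingredient specific to this corollary, namely that $K_n^k$ is vertex-transitive, so $K_n^k(u)\diamond S_m^k(u)$ is the same hypergraph up to isomorphism for every choice of $u$.
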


\bibliographystyle{amsplain}

\end{document}